\newcommand{\e}{\epsilon}
\newcommand{\abs}[1]{\lvert#1\rvert}
\newcommand{\babs}[1]{\big\lvert#1\big\rvert}
\newcommand{\dsp}{\displaystyle}
\newcommand{\dt}{\partial_t}
\newcommand{\R}{{\mathbb R}}
\newcommand{\N}{{\mathbb N}}
\newcommand{\eps}{\epsilon}
\newcommand{\bv}{{\mathbf v}}
\newcommand{\Me}{M_\epsilon(\phi)}
\newcommand{\Meb}{M_\epsilon(\underline{\phi})}
\newcommand{\Neab}{N_{\epsilon,\alpha}(\underline{\phi},\underline{n})}
\newcommand{\Nea}{N_{\epsilon,\alpha}({\phi},{n})}
\newcommand{\un}{\underline{n}}
\newcommand{\ubv}{\underline{\bv}}
\theoremstyle{remark}
\newtheorem{remark}{Remark}
\newtheorem{notation}{Notation}
\theoremstyle{definition}
\theoremstyle{theorem}
\newtheorem{proposition}{Proposition}
\newtheorem{lemma}{Lemma}
\newtheorem{theorem}{Theorem}
\newtheorem*{merci}{Acknowledgements}
\begin{document}
\title[ZK equation]{The Cauchy problem for the Euler-Poisson system and  derivation of the Zakharov-Kuznetsov equation}
\author{David Lannes}
\address{ D.L : DMA, ENS Ulm,\\ 45 rue d'Ulm, 75230 Paris, France\\ E-mail lannes@ens.fr}
\author{Felipe Linares}
\address{ F.L : IMPA,\\ Estrada Dona Castorina 110\\ Rio de Janeiro 22460-320, RJ Brasil\\ E-mail linares@impa.br}
\author{Jean-Claude Saut}
\address{ J.-C.S.: Laboratoire de Math\' ematiques, UMR 8628,\\
Universit\' e Paris-Sud et CNRS,\\ 91405 Orsay, France\\ E-mail jean-claude.saut@math.u-psud.fr}
\date{April 19, 2012}
\maketitle
\begin{abstract}
We consider in this paper the rigorous justification of the
Zakharov-Kuznetsov equation from the Euler-Poisson system for
uniformly magnetized plasmas. We first provide a proof of the local
well-posedness of the Cauchy problem for the aforementioned system in
dimensions two and three. Then we prove that  the long-wave
small-amplitude limit is described by the Zakharov-Kuznetsov
equation. This is done first in the case of cold plasma; we then show
how to extend this result in presence of the isothermal pressure term
with uniform estimates when this latter goes to zero.
\end{abstract}

\section{Introduction}

\subsection{General Setting}

The Zakharov-Kuznetsov equation
\begin{equation}\label{ZK}
 u_t+u\,\partial_xu+\partial_x\Delta u=0, \qquad u=u(x,y,z,t), \qquad (x,y,z)\in \R^d,\, t\in \R,\, d=2, 3
  \end{equation}
 was introduced as an asymptotic model in \cite{ZK} (see also \cite{LaS}, \cite{IR}, \cite{D}, and  \cite{VMPH} for some generalizations) to describe the propagation of nonlinear ionic-sonic waves in a magnetized plasma.  
  
   The Zakharov-Kuznetsov is  a natural multi-dimensional extension of the Korteweg-de Vries equation,
   quite different from the well-known Kadomtsev-Petviashvili (KP) equation though.

Contrary to the Korteweg-de Vries or the Kadomtsev-Petviashvili
equations, the Zakharov-Kuznetsov equation is not completely integrable but
it has a hamiltonian structure and possesses two invariants, namely
(for $u_0=u(\cdot,0)$) :
\begin{equation}\label{L^2}
M(t) = \int_{R^d} u^2(x,t)=\int_{R^d} u_0^2(x) = M(0)
\end{equation}
and the hamiltonian
\begin{equation}\label{H}
H(t) = \frac{1}{2}\int_{R^d}[  |\nabla u|^2 -\frac{u^3}{3}]= \frac{1}{2}\int_{R^d}[ |\nabla u_0|^2 -\frac{u_0^3}{3}]=H(0).
\end{equation}
The Cauchy problem for the Zakharov-Kuznetsov equation has been proven to be globally well posed in the two-dimensional case for data in $H^1(\R^2)$ (\cite{Fa}), and locally well-posed in the three-dimensional case for data in $H^s(\R^3)$, $s>\frac{3}{2}$, (\cite{LiS}) and recently in $H^s(\R^3),$ $ s>1,$ \cite{RV}. We also refer to  \cite{LPS} for solutions on a nontrivial background and to \cite{LiPa} and  \cite{RV2} for well-posedness results of the Cauchy problem for {\it generalized} Zakharov-Kuznetsov equations in $\R^2.$ Unique continuation properties for the Zakharov-Kuznetsov equation were established in \cite{Pa}, \cite{BIM}.

The Zakharov-Kuznetsov equation was formally derived in \cite{ZK}  as
a long wave small-amplitude limit of the following Euler-Poisson
system in the ``cold plasma'' approximation,
\begin{equation}
    \label{eulerpoisson}
    \left\lbrace
   \begin{array}{lcl}
   	\dsp\partial_tn+\nabla \cdot {\bf v}+\nabla\cdot(n{\bf v})=0,\\
	\dsp\partial_t{\bf v}+({\bf v}\cdot\nabla){\bf v}+\nabla\phi+a{\bf e}\wedge {\bf v}=0,\\
	\dsp\Delta\phi-e^{\phi}+1+n=0.
    \end{array}\right.
\end{equation}
Here $n$ is the deviation of the ion density from $1$, ${\bf v}$ is the ion velocity, $\phi$ the electric potential, $a$ is a measure of  the uniform magnetic field, applied along the vector ${\bf e}=(1,0,0)^T$ so that if ${\bf v}=(v_1,v_2,v_3)^T,$ ${\bf e}\wedge {\bf v}=(0,-v_3,v_2)^T.$ \\
Note that this skew-adjoint term is similar to a Coriolis term in the Euler equations for inviscid incompressible fluids.

The main goal of the present paper is to justify rigorously this
formal long-wave limit. The one-dimensional case (leading to the
Korteweg-de Vries equation) has been partially justified in
\cite{C-BY}, and Guo-Pu gave in a recent preprint a full justification
of this limit \cite{GuoPu}.

Setting $\rho = (1+n)$, (\ref{eulerpoisson}) writes 
\begin{equation}\label{eulerpoissonbis}
  \left\lbrace
   \begin{array}{lcl}
   	\dsp\partial_t\rho+\nabla\cdot(\rho {\bf v})=0,\\
	\dsp\partial_t{\bf v}+({\bf v}\cdot\nabla){\bf v}+\nabla\phi+a{\bf e}\wedge {\bf v}=0,\\
	\dsp\Delta\phi-e^{\phi}+\rho=0.
    \end{array}\right.
\end{equation}
In this formulation, the Euler-Poisson system possesses a (formally) hamiltonian conserved energy
\begin{eqnarray}
\nonumber
H(\rho,{\bf v}, \phi) &=& \int_{\R^d}\lbrack \frac{1}{2}\rho|{\bf
  v}|^2+\rho\phi-\frac{1}{2}|\nabla \phi|^2-e^{\phi}+1\rbrack dx\\
\label{hamiltonian}
&=& \int_{\R^d}\lbrack \frac{1}{2}\rho|{\bf v}|^2+\frac{1}{2}|\nabla \phi|^2+e^{\phi}(\phi -1)+1\rbrack dx,
\end{eqnarray}
as it is easily seen by multiplying the first (resp. second) equation in \eqref{eulerpoissonbis} by $\frac{1}{2}|{\bf v}|^2+\phi$ (resp. $\rho {\bf v}$), then adding and integrating over $\R^d$.\\
Of course one has to find a correct functional setting in order to justify the definition of $H$ and its conservation (see Remark  1 in Subsection  2.1 below).\\
Note that the Euler-Poisson system with $a=0$ has another formally conserved quantity, namely the impulse
\begin{equation}
\label{impulse}
\frac{d}{dt}\int_{\R^d}\rho{\bf v}dx =0.
\end{equation}
This conservation law is easily derived (formally) from the equation for $\rho {\bf v}$:
\begin{equation}\label{eqrhov}
\partial_t(\rho{\bf v})+\nabla\cdot(\rho{\bf v}\otimes{\bf v})+\rho\nabla \phi=0.
\end{equation}

Linearizing (\ref{eulerpoisson}) around the constant solution $n=1,$ ${\bf v} =0,$ $\phi =0,$ one finds the dispersion relation for a plane wave $e^{i(\omega t-{\bf k}. x)}$,  $ {\bf k} = (k_1, k_2, k_3)$:
\begin{equation}
\label{dispersion}
\omega^4({\bf k}) +\omega^2( {\bf k})(a^2 -\frac{| {\bf k}|^2}{1+|{\bf k}|^2})-a^2\frac{k_1^2}{1+|{\bf k}|^2}=0
\end{equation}
or
\begin{equation}
\label{dispersionbis}
(1+ |{\bf k}|^2) - \frac{k_1^2}{\omega^2({\bf k})} - \frac{|{\bf k}_{\perp}|^2}{\omega^2({\bf k}) + a^2} =0
\end{equation}
where $|{\bf k}_{\perp}|^2 = k_2^2 +  k_3^2$.\\
In the absence of applied magnetic field (a =0), this relation reduces to:
\begin{equation}
\label{a=0}
\omega^2({\bf k})=\frac{|{\bf k}|^2}{1+|{\bf k}|^2}.
\end{equation}
Those relations display the weakly dispersive character of the Euler-Poisson system.

Contrary to the KP case (see \cite{AL} for the justification of various asymptotic models of surface waves) the rigorous justification of the long-wave limit of the Euler-Poisson system has not been carried out so far (see however \cite{C-BY,GuoPu} in the one-dimensional case). This justification is the main goal of the present paper. 

To start with, we investigate the local well-posedness of the Euler-Poisson
system (\ref {eulerpoisson})  which does not raise any particular
difficulty but for which, to our knowledge, no result
seems to be explicitly available in the literature in dimensions $2$
and $3$. The paper \cite{T} concerns a ``linearized" version of (\ref
{eulerpoisson}), namely the term $\Delta\phi +1-e^{\phi}$ is replaced
by its linearization at $\phi =0$ that is $(\Delta -1)\phi$ (see also
\cite{HMT} where a unique continuation property is established for the
one- dimensional version of this ``modified" Euler-Poisson
system). Note that this ``linearized" version of (\ref {eulerpoisson})
is somewhat reminiscent of the pressureless Euler-Poisson system which
has been intensively studied (see for instance  \cite {BC-T},
\cite{DC-T} and the references therein).

In the one-dimensional case  (\ref{eulerpoisson}) has the very simple form
\begin{equation}
\label {oneD}
 \left\lbrace
   \begin{array}{lcl}
   	\dsp\partial_tn+\lbrack(1+n){\bf v}\rbrack_x=0,\\
	\dsp\partial_t{\bf v}+{\bf v}{\bf v}_x+\phi_x=0,\\
	\dsp \phi_{xx}-e^{\phi}+1+n=0.
    \end{array}\right.
\end{equation}
The existence of supersonic solitary waves for  (\ref{oneD}) has been proven in \cite{LS}. The linear stability of those solitary waves was investigated in \cite{HS} and their interactions was studied in \cite{HNS}, in particular in comparison with their approximations in the long wave limit by KdV solitary waves.

Though our analysis is mainly concerned with the higher dimensional
case $d=2,3$, our results apply as well to the system (\ref{oneD}) and
to provide an alternative proof to \cite{GuoPu} of the justification
of the KdV approximation.

Let us mention finally, that  \eqref{eulerpoisson} is valid for cold
plasmas only; in the general case, an isothermal pressure term must be added,
\begin{equation}
    \label{eulerpoissonHOT}
    \left\lbrace
   \begin{array}{lcl}
   	\dsp\partial_tn+\nabla \cdot {\bf v}+\nabla\cdot(n{\bf v})=0,\\
	\dsp\partial_t{\bf v}+({\bf v}\cdot\nabla){\bf
          v}+\nabla\phi+\alpha \frac{\nabla n}{1+n}+a{\bf e}\wedge {\bf v}=0,\\
	\dsp\Delta\phi-e^{\phi}+1+n=0,
    \end{array}\right.
\end{equation}
with $\alpha\geq 0$. For this system (with $\alpha>0$), global existence for small data
has been proved \cite{GuoPausader} in dimension $d=3$ in absence of
magnetic field ($a=0$) and in the irrrotational case.  Still for
$\alpha>0$, in \cite{GVHKR}, the authors
provide for the full equations uniform energy estimate in the
quasineutral limit (i.e. $e^\phi=1+n$) for well prepared initial data
(note that they also handle the initial boundary value problem). The derivation
and justification of the KdV approximation is generalized in
\cite{GuoPu} using a different proof as in the case $\alpha=0$. We
show how to extend our results on the ZK approximation to
(\ref{eulerpoissonHOT}) and provide uniform estimates with respect to
$\alpha$ that allow one to handle the convergence of solutions of
(\ref{eulerpoissonHOT}) to solutions of (\ref{eulerpoisson}) when $\alpha\to 0$.

\subsection{Organization of the paper}
The paper is organized as follows. \\
 In Section \ref{CPEP} we prove that the Cauchy problem for the Euler-Poisson system (\ref{eulerpoisson}) is locally well-posed. The main step is to  express $\phi$ as a function of $n$ by solving the elliptic equation in
 (\ref{eulerpoisson}) by the super and sub-solutions method. This step is of course trivial when one considers the ``linearized" Euler-Poisson system. Then we establish the local well-posedness for  data $(n_0, {\bf v}_0)$ in $H^{s-1}(\R^d)\times H^s(\R^d)^d,\; s>\frac{d}{2}+1$ such that $|n_0|_{\infty}<1,$ thus generalizing the result of \cite{T}. We use in a crucial way  the smoothing property of the map $n\mapsto \phi$.

  In Section \ref{LWEP} we derive rigorously the Zakharov-Kuznetsov equation as a long wave limit of the Euler-Poisson system.
  In order to do this, we need to introduce a small parameter
  $\epsilon$ and to establish for a scaled version of  the
  Euler-Poisson system existence and bounds on the correct time
  scale. However the elliptic equation for $\phi$ provides a smoothing
  effect which is not uniform with respect to $\epsilon$ and this
  makes the Cauchy problem more delicate (we cannot apply the previous
  strategy which would give an existence time shrinking to zero with
  $\epsilon$).  We are thus led to view the Euler-Poisson system as a
  semilinear perturbation of a symmetrizable quasilinear system and we
  have to find the correct symmetrizer.  We obtain in this framework
  well-posedness (with uniform bounds on the correct time scale) for
  data in  $H^{s}(\R^d)\times H^{s+1}_\eps(\R^d)^d,\; s>\frac{d}{2}+1$
  (we refer to \eqref{defHseps} for the definition of $ H^{s+1}_\eps(\R^d)$).\\
 We then prove that this solution is well approximated by the solution of the ZK equation on the relevant time scales.
 
We finally show in Section \ref{ISP} how to modify the results of
Section \ref{LWEP} when the isothermal pressure is not neglected,
i.e. when one works with (\ref{eulerpoissonHOT}) instead of
(\ref{eulerpoisson}). The presence of a nonzero coefficient $\alpha>0$
in front of the isothermal pressure term induces a smoothing effect on
the variable $n$; however, this smoothing effect vanishes as
$\alpha\to 0$, and, in order to obtain an existence time uniform with
respect to $\eps$ and $\alpha$, it is necessary to work in a Banach
scale indexed by these parameters and that is adapted to measure this
smoothing effect.
 
\subsection{Notations}

- Partial differentiation are denoted by subscripts,  $\partial_x$,  $\partial_t$, $\partial_j=\partial_{x_j}$ etc.

- We denote by $\abs{\cdot}_p$ ($1\leq p\leq\infty$) the standard norm of the Lebesgue
spaces $L^p(\R^d)$ ($d=2,3$).

- We use the Fourier multiplier notation: $f(D)u$ is defined as ${\mathcal F}(f(D)u)(\xi)=f(\xi)
\widehat{u}(\xi)$, where ${\mathcal F}$ and $\widehat{\cdot}$ stand for the Fourier transform.

- The operator $\Lambda=(1-\Delta)^{1/2}$ is equivalently defined using the Fourier
multiplier notation to be $\Lambda=(1+\vert D\vert^2)^{1/2}$.

- The standard notation $H^s(\R^d)$, or simply $H^s$ if the underlying
domain is clear from the context, is used for the $L^2$-based
Sobolev spaces; their norm is written $\vert \cdot\vert_{H^s}$.

- For a given Banach space $X$ we will denote $|.|_{X,T}$ the norm in $C(\lbrack 0,T\rbrack; X).$ When 
$X=L^p$, the corresponding norm will be denoted $|.|_{p,T}$.

- We will denote by $C$ various absolute constants.

- The notation $A+\langle B\rangle_{s>t_0}$ refers to $A$ if $s\leq
t_0$ and $A+B$ if $s>t_0$.
\section{The Cauchy problem for the Euler-Poisson system}\label{CPEP}

The aim of this section is to prove the local well-posedness of the Cauchy problem associated to the Euler-Poisson system.
\begin{equation}
    \label{ivp-eulerpoisson}
    \left\lbrace
   \begin{array}{lcl}
   	\dsp\partial_tn+\nabla \cdot {\bf v}+\nabla\cdot(n{\bf v})=0,\\
	\dsp\partial_t{\bf v}+({\bf v}\cdot\nabla){\bf v}+\nabla\phi+a{\bf e}\wedge {\bf v}=0,\\
	\dsp\Delta\phi-e^{\phi}+1+n=0\\
	n(\cdot,0)=n_0,\\
	{\bf v}(\cdot,0)={\bf v}_0.
    \end{array}\right.
\end{equation}

\subsection{Solving the elliptic part}

We consider here, for a given $n$, the elliptic equation
\begin{equation}\label{elliptic}
L(\phi)=-\Delta \phi +e^{\phi}-1=n.
\end{equation}

\begin{proposition}\label{prop1}
Let $n \in L^{\infty}(\R^d)\cap H^1(\R^d)$, $d=1,2,3$,  such that $\inf_{\R^d}1+n>0$. Then there exists a unique solution $\phi \in 
H^3(\R^d)$ of
(\ref{elliptic}) such that:\\
(i) The following estimate holds
$$
 -K_-=\ln (1-|n|_{\infty})\leq \phi \leq K_+=\ln(1+|n|_\infty).
$$
(ii) Defining $c_\infty(n)=\abs{ (1+n)^{-1}}_\infty$ and $I_1(n)= \frac{1}{c_\infty(n)}\abs{n}_2^2+\abs{n}_{H^1}^2$, one has
$$
\int_{\R^d}\lbrack  \frac{c_{\infty}(n)}{2}|\phi|^2  +\frac{1}{2}|\nabla \phi|^2 +|\Delta \phi|^2 +2e^{\phi}|\nabla \phi|^2 +\frac{1}{2}(e^{\phi}-1)^2\rbrack 
\leq \frac{1}{2}I_1(n).
$$
(iii) If furthermore, $n \in H^s(\R^d),$\, $s\geq 0$, then $\phi \in H^{s+2}(\R^d$) and
\begin{equation}\label{higher}
|\Lambda^{s+1}\nabla \phi|_2\leq  F_s(I_1(n),\abs{n}_\infty)(1+\abs{n}_{H^s}\big),
\end{equation}
where $F_s(.,.)$ is an increasing function of its arguments.
\end{proposition}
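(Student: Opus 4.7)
The plan is to tackle parts (i), (ii), (iii) in turn, relying respectively on a sub/super-solution (or variational) construction, a three-multiplier energy estimate, and an elliptic bootstrap.

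For (i), I would verify that the constants $-K_-$ and $K_+$ are a sub- and super-solution of $L(\phi) = n$ — indeed $e^{\pm K_\pm} - 1 = \pm|n|_\infty$ which brackets $n$ — and then invoke the monotone iteration scheme to produce a solution $\phi$ satisfying the pointwise bounds; alternatively the strictly convex functional $J(\phi) = \int[\tfrac12|\nabla\phi|^2 + (e^\phi - \phi - 1) - n\phi]$ admits a unique $H^1$-minimizer. Elliptic regularity, using the smoothness of the composition $e^\phi$ and the boundedness of $\phi$ just obtained, then upgrades $\phi$ to $H^3$. Uniqueness comes from subtracting two solutions $\phi_1,\phi_2$, testing against $\phi_1-\phi_2$, and noting that strict monotonicity of $e^x$ makes both terms in the identity $|\nabla(\phi_1-\phi_2)|_2^2 + \int(e^{\phi_1}-e^{\phi_2})(\phi_1-\phi_2) = 0$ non-negative.

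For (ii) the idea is to multiply the equation by each of $\phi$, $-\Delta\phi$, and $e^\phi - 1$ and combine. Integration by parts gives three identities: $|\nabla\phi|_2^2 + \int(e^\phi-1)\phi = \int n\phi$ from $\phi$; $|\Delta\phi|_2^2 + \int e^\phi|\nabla\phi|^2 = \int\nabla n\cdot\nabla\phi$ from $-\Delta\phi$; and $\int e^\phi|\nabla\phi|^2 + |e^\phi-1|_2^2 = \int n(e^\phi-1)$ from $e^\phi-1$. Summing the last two (equivalently, squaring the equation and integrating) produces $|\Delta\phi|_2^2 + 2\int e^\phi|\nabla\phi|^2 + |e^\phi-1|_2^2 = |n|_2^2$, which already delivers the ``high order'' half of the claim. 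To extract the $\tfrac{c_\infty(n)}{2}|\phi|^2$ term from the first identity, I would establish the pointwise lower bound $(e^\phi-1)\phi \geq \phi^2/c_\infty(n)$: the ratio $\psi\mapsto(e^\psi-1)/\psi$ is monotone increasing, and evaluating the equation at an interior infimum of $\phi$ (where $-\Delta\phi\leq 0$) forces $e^\phi \geq 1+n \geq 1/c_\infty(n)$, whence $\phi\geq -\ln c_\infty(n)$; the elementary inequality $C-1\geq \ln C$ for $C\geq 1$ then gives the claimed ratio lower bound. A weighted Young's inequality on $\int n\phi$ and on $\int\nabla n\cdot\nabla\phi$ (with weights $c_\infty(n)^{\pm 1}$) closes the estimate. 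The most delicate step is calibrating these weights so that the coefficients $c_\infty(n)$ and $1/c_\infty(n)$ line up exactly as stated.

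For (iii) I would proceed by induction on $s$. Applying $\Lambda^s$ to the equation yields an elliptic problem for $\Lambda^s\phi$ with coercive principal part $-\Delta + e^\phi\cdot\mathrm{Id}$ (coercivity comes from $e^\phi \geq e^{-K_-} > 0$), a source involving $\Lambda^s n$, and nonlinear remainders of commutator type $[\Lambda^s,e^\phi]\phi$. The commutator and the composition $e^\phi$ are controlled by Moser-type tame estimates, using that $\phi$ is already $L^\infty$-bounded by (i) and, inductively, in $H^{s+1}$; the embedding $H^r\hookrightarrow L^\infty$ for $r>d/2$ is used freely. Bootstrapping from the base case furnished by (ii) gives $\phi\in H^{s+2}$ with a bound of the stated form \eqref{higher}. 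Keeping the dependence on $|n|_{H^s}$ linear at the top order is the main technical constraint; it is precisely the tameness that is needed for the iterative use of this estimate in the well-posedness theory of Section \ref{CPEP}.
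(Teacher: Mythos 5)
Your proposal follows essentially the same route as the paper: sub/super-solutions $\pm K_\pm$ with a monotone iteration for existence and uniqueness by monotonicity of $e^x$, the same three multipliers $\phi$, $-\Delta\phi$, $e^{\phi}-1$ combined with a pointwise lower bound on $(e^{\phi}-1)\phi$ for (ii), and an elliptic bootstrap with Moser-type estimates for (iii). The only step you elide is that the monotone iteration must be run on the balls $B_p$ with Dirichlet data and the limit $p\to\infty$ taken afterwards (the constant sub/super-solutions are not in $H^1(\R^d)$); apart from that, and the calibration of the constants in (ii) which you honestly flag, the argument is the paper's.
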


\begin{remark}\label{triv}
Writing $\xi_i\xi_j\hat{\phi } = -\frac{\xi_i \xi_j}{|\xi|^2} |\xi|^2 \hat{\phi } $ and using the $L^2$ continuity of the Riesz transforms we see that we can replace $\Delta \phi$ in the left hand side of (ii)  in Proposition \ref{prop1} by any derivative $\partial^{\alpha}\phi,$ replacing possibly the right hand side by $C I_1(n),$ where $C$ is an absolute constant.
\end {remark}

\begin{remark}\label{triv2}
Notice that by (ii), one has $|\phi|^2_2\leq \frac{I_1(n)}{c_{\infty}(n)}.$
\end{remark}

\begin{proof}
We use the method of sub and super-solutions to construct the solution $\phi$. As a super-solution we take $\phi_+ =K_+$ where $K_+$ is a positive constant satisfying 
$$K_+\geq \ln(1+|n|_{\infty}),$$
so that 
$$L(\phi_+)\geq n.$$
As a sub-solution, we choose $\phi_-=-K_-<0$ where $K_-$ is a positive constant satisfying
$$K_-\geq \ln(\frac{1}{\inf_{\R^d}(1+n)})=\ln (\frac{1}{c_{\infty}(n)}),$$
so that 
$$L(\phi_-)\leq n.$$
The next elementary lemma will be useful.
\begin{lemma}\label{triv}
Let $\Omega$ be an arbitrary open subset of $\R^n$ and $\phi \in L^2(\Omega)\cap L^{\infty}(\Omega),$ $\phi \not=0.$ Then
$$|e^{\phi}-1|_2\leq \frac{e^{|\phi|_{\infty}}-1}{|\phi|_{\infty}}|\phi|_2.$$
If moreover $\phi \in H^1(\Omega),$ then $e^{\phi}-1\in H^1(\Omega).$
\end{lemma}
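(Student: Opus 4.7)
The key pointwise input is the monotonicity of the function $f(t)=(e^t-1)/t$, extended by $f(0)=1$. I would first verify that $f$ is non-decreasing on $\R$: computing
\[
f'(t)=\frac{(t-1)e^t+1}{t^2},
\]
one checks that $g(t)=(t-1)e^t+1$ satisfies $g(0)=0$ and $g'(t)=te^t$, so $g\geq 0$ on $\R$, whence $f'\geq 0$. Since $f(t)>0$ everywhere (numerator and denominator have the same sign), this yields the scalar inequality
\[
|e^t-1|=f(t)\,|t|\leq f(M)\,|t|=\frac{e^M-1}{M}\,|t|\quad\text{whenever } |t|\leq M,
\]
valid for any $M>0$.

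Applying this pointwise with $M=|\phi|_\infty$ and $t=\phi(x)$, I would square and integrate over $\Omega$ to obtain
\[
\int_\Omega|e^{\phi(x)}-1|^2\,dx\leq \Bigl(\frac{e^{|\phi|_\infty}-1}{|\phi|_\infty}\Bigr)^2\int_\Omega|\phi(x)|^2\,dx,
\]
and take a square root. This establishes both that $e^{\phi}-1\in L^2(\Omega)$ and the claimed estimate.

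For the second assertion, assuming $\phi\in H^1(\Omega)$, the function $\psi\mapsto e^\psi-1$ is $C^1$ with bounded derivative on the interval $[-|\phi|_\infty,|\phi|_\infty]$, hence Lipschitz on the range of $\phi$. By the standard chain rule for Sobolev functions composed with a Lipschitz map (applied to the truncation of the exponential, or via density of smooth functions with uniformly bounded supremum), $e^\phi-1$ admits a weak gradient given a.e.\ by $\nabla(e^\phi-1)=e^\phi\nabla\phi$, and
\[
|\nabla(e^\phi-1)|_2\leq e^{|\phi|_\infty}|\nabla\phi|_2.
\]
Combined with the $L^2$ bound already obtained, this gives $e^\phi-1\in H^1(\Omega)$.

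The only mildly delicate point is the monotonicity verification for $f$; everything else is direct pointwise estimation and an application of the chain rule for Sobolev composition. No serious obstacle is expected.
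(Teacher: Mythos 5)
Your proof is correct and takes essentially the same route as the paper: both arguments reduce the $L^2$ estimate to the pointwise scalar inequality $|e^t-1|\le \frac{e^M-1}{M}|t|$ for $|t|\le M$ (with $M=|\phi|_\infty$) and then obtain the $H^1$ statement from $|\nabla e^{\phi}|_2\le e^{|\phi|_\infty}|\nabla\phi|_2$. The only difference is how the scalar inequality is established --- the paper estimates the power series $e^{\phi}-1=\phi\sum_{n\geq 1}\frac{\phi^{n-1}}{n!}$ termwise, while you prove monotonicity of $t\mapsto (e^t-1)/t$ by calculus; both are valid, and your extra care with the Sobolev chain rule is a harmless refinement of a step the paper simply asserts.
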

\begin{proof}
To prove the first point we expand
$$|e^{\phi}-1|= |\phi \sum_{n\geq 1}\frac{\phi^{n-1}}{n!}|\leq \frac{|\phi |}{|\phi|_{\infty}}\sum_{n\geq 1}\frac{|\phi|_{\infty}^n}{n!}$$
The last assertion results from the estimate
$$|\nabla e^{\phi}|_2\leq e^{|\phi|_{\infty}}|\nabla \phi|_2.$$
\end{proof}

 Let $F(\phi)=e^{\phi}-1.$ Then $F'(\phi)$ is bounded by a positive constant $K$ on the interval $\lbrack -K_-,K_+\rbrack .$ Let $\lambda > K$ be so that the function $\lambda I-F$ is strictly increasing in  $\lbrack -K_-,K_+\rbrack .$\\
  Let $B_p=\lbrace x\in \R^d, |x|<p\rbrace.$ We consider the auxiliary problem, for a given $\phi \in H^2(B_p)$ satisfying
$-K_-\leq \phi \leq K_+$,
\begin {equation}\label{auxiliary}
 \left\lbrace
    \begin{array}{l}
    \dsp -\Delta \psi+\lambda \psi =\lambda\phi-F(\phi)+n\\
    \dsp \psi_{\vert_{\partial B_p}}=0,
    \end{array}\right.
\end {equation}
and we write $S(\phi)=\psi$.
Since  $F(\phi)\in L^2(B_p),$ $\psi=S(\phi)\in H^1_0\cap H^2(B_p).$ Moreover, one checks easily by the maximum principle that, since $-K_-=\phi_-\leq \phi \leq K_+=\phi_+$, then also
$$-K_-=\phi_-\leq S(\phi) \leq K_+=\phi_+.$$
We now define inductively $\phi_0=\phi_-$, $\phi_{k+1}=S(\phi_k).$ By the maximum principle, one checks that $(\phi_k)$ is increasing, $\phi_{k+1}\geq \phi_k, k\in \N$, and that
$$\phi_-\leq \phi_k\leq \phi_+, k\in\N.$$
Moreover, $S: L^2(B_p)\rightarrow L^2(B_p)$ is continuous since $z\mapsto \lambda z-F(z)$ is Lipschitz. The sequence $(\phi_k)$ is increasing and bounded from above. 
It converges almost everywhere to some $\phi$ which belongs to $L^2(B_p)$ since $B_p$ is bounded. By Lebesgue's theorem, the convergence holds also in $L^2(B_p).$ On the other hand, using that $F$ is Lipschitz on  $\lbrack -K_-,K_+\rbrack ,$ one checks that  $(\phi_k)$ is Cauchy in $H^1_0(B_p),$ proving that $\phi \in H^1_0(B_p),$ and is solution of (\ref{elliptic}) in $B_p.$ Moreover  $e^{\phi}-1\in L^2(B_p)$ and $\phi \in H^2(B_p).$ 

 Assuming that $\phi_1$ and $\phi_2$ are two $H^1_0\cap H^2(B_p)$ solutions we deduce immediately that, setting $\phi=\phi_1-\phi_2,$
 $$|\nabla \phi|_2^2+\int_{B_p} (e^{\phi_1}-e^{\phi_2})\phi=0,$$
and we conclude that $\phi =0$ by the monotonicity of the exponential.

To summarize, for any $p\in \N$, we have proven the existence of a unique solution in $H^1_0\cap H^2(B_p)$, (which we will denote $\phi_p$ from now on), of  the elliptic problem
\begin {equation}\label{elliptic2}
 \left\lbrace
    \begin{array}{l}
    \dsp -\Delta \phi+e^{\phi}-1 =n\\
    \dsp \phi_{\vert_{\partial {B_p}}} =0.
    \end{array}\right.
\end {equation}
Moreover, $\phi_p$ satisfies the bounds  :
\begin{equation}\label{uniform}
\phi_-=-K_-\leq \phi_p \leq \phi_+=K_+.
\end{equation}
We derive now a series of (uniform in $p$) estimates on $\phi_p.$
We first notice that for $\phi \geq -K_-,$ one has with $\alpha_0=\frac{1-e^{-K_-}}{K_-},$
\begin{equation}\label{easy}
(e^{\phi}-1)\phi\geq \alpha_0 \phi^2.
\end{equation}
Multiplying (\ref{elliptic2}) by $\phi_p$ and integrating over $B_p$ we thus deduce
\begin{equation}\label{est1}
\int_{B_p}\lbrack \frac{\alpha_0}{2} |\phi_p|^2+|\nabla \phi_p|^2 \rbrack \leq \frac{1}{2\alpha_0}|n|^2_2.
\end{equation}
Multiplying (\ref{elliptic2}) by $-\Delta \phi_p$ and integrating over $B_p$ we obtain
\begin{equation}\label{est2}
\int_{Bp}|\Delta \phi_p|^2+\int_{Bp}e^{\phi_p}|\nabla \phi_p|^2 =\int_{Bp}\nabla \phi_p\cdot\nabla n.
\end{equation}
Finally we integrate (\ref{elliptic2}) against $(e^{\phi_p}-1)$ to get
\begin{equation}\label{est3}
\int_{B_p}e^{\phi_p}|\nabla \phi_p|^2 +\frac{1}{2}\int_{B_p}(e^{\phi_p}-1)^2\leq \frac{1}{2} \int_{B_p}n^2.
\end{equation}
Adding (\ref{est1}), (\ref{est2}), (\ref{est3}) we obtain
 \begin{eqnarray}
\nonumber
\int_{B_p}\lbrack  \frac{\alpha_0}{2}|\phi_p|^2  +\frac{1}{2}|\nabla \phi_p|^2 +|\Delta \phi_p|^2 +2e^{\phi_p}|\nabla \phi_p|^2 +\frac{1}{2}(e^{\phi_p}-1)^2 \rbrack\\
\label{est4}
\leq \frac{1}{2\alpha_0}|n|^2_2+\frac{1}{2}|\nabla n|^2_2+\frac{1}{2}|n|_2^2 .
\end{eqnarray}
Now we extend $\phi_p$ outside $B_p$ by $0$ to get a $H^1(\R^d)$ function  $\tilde{\phi_p}.$ Obviously $\tilde{\phi_p}$ satisfies the bound (\ref{est1}) and (\ref{est3}) and up to a subsequence, $\tilde{\phi_p}$
converges weakly in $H^1(\R^d)$, strongly in $L^2_{loc}(\R^d)$, and almost everywhere to some function $\phi \in H^1(\R^d)$ which satisfies the bound (\ref{uniform}).\\
Let us prove that $\phi$ is solution of the elliptic equation $L\phi =n$ (see (\ref{elliptic})). Let $\chi \in \mathfrak{D}(\R^d).$ Then supp $\chi \subset B_p$ for some $p$. Since $\phi_p$ solves $L\phi_p =n$ in
$B_p$, one has
$$\int_{B_p}\nabla \phi_p\cdot\nabla \chi +\int_{B_p}(e^{\phi_p}-1)\chi=\int_{B_p}n\chi,$$
and thus
$$\int_{\R^d}\nabla \tilde{ \phi_p}\cdot\nabla \chi +\int_{\R^d}(e^{\tilde{\phi_p}}-1)\chi=\int_{\R^d}n\chi.$$
One then infers by  weak convergence and Lebesgue theorem (using Lemma \ref{triv}) that
$$\int_{\R^d}\nabla \phi\cdot\nabla \chi +\int_{\R^d}(e^{\phi}-1)\chi=\int_{\R^d}n\chi,$$
proving that $\phi$ solves
\begin{equation}\label{phiell}
-\Delta \phi+e^{\phi}-1=n.
\end{equation}
Uniqueness is derived by the same argument used for $\phi_p.$ By passing to the limit in (\ref{est1}) and (\ref{est3}) one obtains the corresponding estimates for $\phi.$ Since $e^{\phi}-1-n \in L^2(\R^d)$, $\phi \in H^2(\R^d)$ and (\ref{est2}) for $\phi$ follows. Finally the estimate (ii) in Proposition \ref{prop1}, that is (\ref{est4}) for $\phi$, follows by adding the previous estimates.\\
We now prove the higher regularity estimates (iii) assuming  that
$n\in H^s(\R^d)$. From the continuity of the Riesz transforms (see
Remark \ref{triv}), it is enough to control $\abs{\Lambda^s
  \Delta\phi}_2$. We get from \eqref{phiell} that
\begin{eqnarray*}
\abs{\Lambda^s \Delta\phi}_2&\leq&
\abs{n}_{H^s}+\abs{e^{\phi}-1}_{H^s}\\
&\leq& \abs{n}_{H^s}+C(\abs{\phi}_\infty) \abs{\phi}_{H^s},
\end{eqnarray*}
the second line being a consequence of Moser's inequality. We can
therefore deduce the result by a simple induction using the first two
points of the proposition.
\end{proof}
\begin{remark}
\label{am}
One easily checks that the energy (see \eqref{hamiltonian}) makes sense for $(n,{\bf v},\phi) \in H^1(\R^d)\times H^1(\R^d)^d\times L^2(\R^d)\cap L^{\infty}(\R^d).$ In fact one has then (recalling that $\rho =1+n$),
$$\rho \phi-(e^{\phi}-1)= \rho \phi-(\phi+g)=n\phi-g, \qquad g \in L^1(\R^d),$$
and on the other hand
$$\rho |{\bf v}|^2= |{\bf v}|^2+n|{\bf v}|^2 \in L^1(\R^d),$$
by Sobolev embedding.
\end{remark}

\subsection{Local well-posedness}

We establish here the local well-posedness of the Cauchy problem for \eqref{ivp-eulerpoisson}.

\begin{theorem}\label{ivp}
Let $s>\frac{d}{2}+1$, $n_0\in H^{s-1}(\R^d)$, ${\bf v}_0 \in
H^s(\R^d)^d$ such that $\dsp \inf_{\R^d}1+n_0 >0$. \\
There exist $T>0$ and a unique solution $(n,{\bf v})\in C(\lbrack 0,T\rbrack; H^{s-1}(\R^d)\times H^s(\R^d)^d)$ of \eqref{ivp-eulerpoisson} such that $|(1+n)^{-1}|_{\infty,T}<1$ and $\phi \in
C(\lbrack 0,T\rbrack; H^{s+1}(\R^d))$. \\
Moreover the energy \eqref{hamiltonian} is conserved on $\lbrack 0,T\rbrack$ and so is the impulse (if $a=0$).
\end{theorem}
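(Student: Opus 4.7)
The strategy I would adopt is to view \eqref{ivp-eulerpoisson} as a quasilinear transport system for $(n,\bv)$ coupled to the semilinear source $\nabla\phi[n]$, where $\phi[n]$ is obtained from the elliptic equation via Proposition \ref{prop1}. The two-derivative smoothing built into Proposition \ref{prop1} guarantees that $\nabla\phi[n]\in H^s$ whenever $n\in H^{s-1}$, so this nonlocal source lives at the regularity of $\bv$. The asymmetry $n\in H^{s-1}$, $\bv\in H^s$ appearing in the theorem is exactly the scaling dictated by the linearization of \eqref{ivp-eulerpoisson}, whose conserved energy is $\abs{n}_{H^{-1}}^2+\abs{\bv}_{L^2}^2$.

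First I would set up a Picard iteration: starting from $(n^0,\bv^0)=(n_0,\bv_0)$, I successively set $\phi^k:=\phi[n^k]$ via Proposition \ref{prop1} and solve in sequence the linear transport problems
\begin{equation*}
\dt\bv^{k+1}+(\bv^k\cdot\nabla)\bv^{k+1}+a\,{\bf e}\wedge\bv^{k+1}=-\nabla\phi^k,
\end{equation*}
\begin{equation*}
\dt n^{k+1}+\bv^k\cdot\nabla n^{k+1}+(1+n^k)\,\nabla\cdot\bv^{k+1}=0,
\end{equation*}
both with initial data $(n_0,\bv_0)$. For $s>d/2+1$ these are linear hyperbolic equations with Lipschitz coefficients and therefore solvable on any compact time interval.

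The crux of the proof is a uniform-in-$k$ a priori estimate in $H^{s-1}\times H^s$. I would apply $\Lambda^{s-1}$ to the $n$-equation and $\Lambda^s$ to the $\bv$-equation, take $L^2$-inner products with $\Lambda^{s-1}n^{k+1}$ and $\Lambda^s\bv^{k+1}$ respectively, and sum. The top-order linear coupling $-(\Lambda^{s-1}n,\Lambda^{s-1}\nabla\cdot\bv)-(\Lambda^s\bv,\Lambda^s\nabla\phi)$ cancels modulo a harmless remainder: an integration by parts in the second term and the identity $\Lambda^2\phi=\phi+n-(e^\phi-1)$ coming from \eqref{elliptic} turn it into
\begin{equation*}
(\Lambda^{s-1}\nabla\cdot\bv,\Lambda^{s-1}n)+(\Lambda^{s-1}\nabla\cdot\bv,\Lambda^{s-1}(\phi-(e^\phi-1))),
\end{equation*}
the first piece of which exactly cancels the $n$-equation contribution, while the second is quadratic in $\phi$ and controlled by Moser together with Proposition \ref{prop1}. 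The quasilinear transport terms are handled by the usual identity $(\Lambda^{s-1}n,\bv\cdot\nabla\Lambda^{s-1}n)=-\tfrac12(\nabla\cdot\bv,|\Lambda^{s-1}n|^2)$ and the Kato-Ponce commutator bound $\abs{[\Lambda^{s-1},\bv]\cdot\nabla n}_2\leq C\abs{\bv}_{H^s}\abs{n}_{H^{s-1}}$, valid for $s>d/2+1$ and crucially not requiring $\abs{\nabla n}_\infty$. The Coriolis term is skew-adjoint and drops out. This yields a differential inequality $\tfrac{d}{dt}E_s^2\leq F(E_s)$ with $E_s^2:=\abs{n}_{H^{s-1}}^2+\abs{\bv}_{H^s}^2$, producing a time $T>0$ independent of $k$ on which the iterates remain in a fixed ball of $H^{s-1}\times H^s$ and $|(1+n^k)^{-1}|_\infty$ stays finite (the latter propagated by Sobolev embedding).

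Convergence of $(n^k,\bv^k)$ then follows from a contraction estimate for the differences, carried out at the weaker regularity $L^2\times H^1$, based on the same cancellation structure and on the Lipschitz continuity of the map $n\mapsto\phi[n]$ from $L^2$ to $H^2$ (itself deduced from a difference estimate for \eqref{elliptic} exploiting the monotonicity of $e^\phi$). The limit $(n,\bv)$ belongs a priori only to $L^\infty([0,T];H^{s-1}\times H^s)$; time continuity in the strong norm is then recovered by a standard Bona-Smith regularization. Uniqueness is obtained by the same low-regularity contraction estimate applied to two strong solutions, and conservation of \eqref{hamiltonian} and \eqref{impulse} (when $a=0$) follows by justifying the formal computations of Section~1, for which the regularity $(n,\bv,\phi)\in C([0,T];H^{s-1}\times H^s\times H^{s+1})$ is ample. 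The main obstacle is precisely the asymmetric regularity: since $n$ lies only barely in $L^\infty$, no commutator estimate may rely on $\abs{\nabla n}_\infty$, and one must combine the sharp form of Kato-Ponce with the structural cancellation provided by the elliptic equation for $\phi$.
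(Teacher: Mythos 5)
Your proposal is correct, and its overall architecture (reduce to a closed system via $\phi=\phi[n]$ from Proposition \ref{prop1}, energy estimates in $H^{s-1}\times H^s$ with sharp Kato--Ponce commutators avoiding $\abs{\nabla n}_\infty$, approximation scheme plus passage to the limit) matches the paper's. The one genuine difference is how the coupling term $\nabla\phi$ is handled at top order. The paper does not use any cancellation: it invokes the two-derivative smoothing of Proposition \ref{prop1}(iii) to bound $\abs{\Lambda^s\nabla\phi}_2\leq F_{s-1}(I_1(n),\abs{n}_\infty)(1+\abs{n}_{H^{s-1}})$ and simply treats $\nabla\phi$ as a semilinear source in the $\bv$-equation, estimating the two coupling contributions $(\Lambda^{s-1}n,\Lambda^{s-1}\nabla\cdot\bv)$ and $(\Lambda^s\bv,\Lambda^s\nabla\phi)$ separately. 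You instead integrate by parts and use $\Lambda^2\phi=\phi+n-(e^\phi-1)$ to cancel the antisymmetric part against the $n$-equation, leaving a quadratic-in-$\phi$ remainder. Both are valid; your computation is correct. What the paper's route buys is simplicity here (no structural manipulation needed once Proposition \ref{prop1}(iii) is in hand); what your route buys is robustness when the smoothing degenerates --- it is essentially the symmetrizer idea the paper is forced to adopt later for the rescaled system in Theorem \ref{thmIVPunif}, where the elliptic smoothing is not uniform in $\eps$. One small caveat: in your Picard scheme the cancellation is not exact between iterates (the $\bv^{k+1}$-equation carries $\nabla\phi^k$, hence $n^k$, while the $n^{k+1}$-equation carries $(1+n^k)\nabla\cdot\bv^{k+1}$ tested against $n^{k+1}$), so for the uniform bounds on the iterates you should fall back on the direct smoothing estimate anyway --- which is available and makes the argument close. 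The remaining ingredients (contraction at the $L^2\times H^1$ level using Lipschitz dependence of $\phi[n]$ on $n$, Bona--Smith for strong continuity, conservation laws justified by the achieved regularity) are standard and consistent with the paper, which constructs solutions by frequency truncation $\chi(jD)$ rather than Picard iteration, an inessential difference.
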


\begin{proof}
Solving \eqref{elliptic} for $\phi$, we set $\nabla \phi =F(n)=\nabla L^{-1}(n)$ and rewrite \eqref{ivp-eulerpoisson} as
\begin{equation}
    \label{ivp-eulerpoissonbis}
    \left\lbrace
   \begin{array}{lcl}
   	\dsp\partial_tn+\nabla\cdot{\bf v}+\nabla\cdot(n{\bf v})=0,\\
	\dsp\partial_t{\bf v}+({\bf v}\cdot\nabla){\bf v}+F(n)+a{\bf e}\wedge {\bf v}=0,\\
		n(\cdot,0)=n_0,\\
	{\bf v}(\cdot,0)={\bf v}_0.
    \end{array}\right.
\end{equation}
We derive first energy estimates. We apply the operator $\Lambda ^{s-1}$ to the first equation in \eqref{ivp-eulerpoissonbis},  $\Lambda ^s$ to the second one and take the $L^2$ scalar product with $\Lambda^{s-1}n$ and $\Lambda ^s{\bf v}$ respectively. Using the Kato-Ponce commutator estimates \cite{KP} and integration by parts in the first equation, we obtain (note that the skew-adjoint term does not play a role here):
\begin{equation}
\label {energyest}
 \left\lbrace
   \begin{array}{lcl}
   	\dsp \frac{1}{2}\frac{d}{dt}|\Lambda^{s-1} n|_2^2\leq |\Lambda^s {\bf v}|_2(1+|\Lambda^{s-1} n|_2)+C|\nabla{\bf v}|_{\infty}|\Lambda^{s-1} n|_2^2,\vspace{2mm}\\
	\dsp\frac{1}{2}\frac{d}{dt}|\Lambda^s {\bf v}|_2^2\leq C|\nabla{\bf v}|_{\infty}|\Lambda^s{\bf v}|_2^2 +C|\Lambda^s F(n)|_2|\Lambda^s{\bf v}|_2.
		 \end{array}\right.
\end{equation}
As soon as $|(1+n)^{-1}|_{L^{\infty}_{(x,t)}} <\infty$, using Proposition \ref{prop1} (for the existence of $\phi$ given $n$) we infer that
\begin{equation}\label{estF}
|\Lambda^s F(n)|_2\leq F_{s-1}(I_1(n),\abs{n}_\infty)\big(1+\abs{n}_{H^{s-1}}\big).
\end{equation}
One has plainly that
$$I_1(n)\leq C|n|^2_{H^1}(\frac{1}{1-|n|_{\infty}}+1)\leq C |n|^2_{H^1}(\frac{2}{c_0}+1),$$
if $1+n \geq \frac{c_0}{2}$.\\
Gathering those inequalities, we obtain the system of differential
inequalities for  $|n(\cdot, t)|_{H^{s-1}}$ and $|{\bf v}(\cdot,
t)|_{H^s}$, and provided that $1-\abs{n}_\infty \geq \frac{c_0}{2}$,
\begin{equation}
\label {diffineq}
 \left\lbrace
   \begin{array}{lcl}
   	\dsp \frac{d}{dt}| n|_{H^{s-1}}\leq C_1(|{\bf
          n}|_{H^{s-1}},|{\bf v}|_{H^s})\\
	\dsp\frac{d}{dt}|{\bf v}|_{H^s}\leq  C_2(|{\bf
          n}|_{H^{s-1}},|{\bf v}|_{H^s})
		 \end{array}\right.
\end{equation}
where $C_1$ and $C_2$ are smooth functions.
Let $\alpha(t)=|n(\cdot, t)|_{H^{s-1}}$ and $\beta(t)=|{\bf v}(\cdot, t)|_{H^{s}}$ and consider the differential system:
\begin{equation}
\label {diffsys}
 \left\lbrace
   \begin{array}{lcl}
   	\dsp \alpha'=C_1(\alpha,\beta)),\\
	\dsp \beta '=C_2(\alpha,\beta)   .
		 \end{array}\right.
\end{equation}
Let $(A,B)$ be the local solution of \eqref {diffsys} with initial data $(A_0,B_0)=(|n_0|_{H^{s-1}}, |{\bf v}_0|_{H^s})$ satisfying $1+n_0\geq c_0$. The solution to \eqref{diffsys} exists on a time interval which length depends only on $(A_0, B_0)$.\\
Coming back to \eqref {diffineq}, one deduces that $(|n(\cdot, t)|_{H^{s-1}},|{\bf v}(\cdot, t)|_{H^s})$ 
is bounded from above by $(A,B)$ 
 on a time interval $I$ which length depends only on  $(|n_0|_{H^{s-1}},|{\bf v}_0|_{H^s})$ (and possibly shortened
to ensure that $1+n\geq \frac{c_0}{2}$ by continuity).\\
To complete the proof we need to smooth out \eqref{eulerpoisson}. This can be done for instance by truncating the high frequencies, that is using $\chi(jD)$ where $\chi$ is a cut-off function and $j=1,2,\dots$. We obtain an ODE system in $H^{s-1}\times H^s.$  The energy estimates are derived as above and one passes to the limit in a standard way.

Finally, the conservation of both the energy and the impulse (if $a=0$) is obvious since the functional setting of Theorem \ref{ivp} allows to justify their formal derivation.
\end{proof}

\section{The long wave limit of the Euler-Poisson system}\label{LWEP}

In order to justify the Zakharov-Kuznetsov equation as a long wave limit of the Euler-Poisson system with an applied magnetic field, we have to introduce an appropriate scaling.

We  set ${\bf v}= (v_x,v_y,v_z).$  Laedke and Spatschek  \cite{LaS} derived formally the Zakharov-Kuznetsov equation by looking  for approximate solutions of \eqref{eulerpoisson} of the form
\begin{equation}\label{rescaled}
\begin{split}
n^{\e}&=\e\,n^{(1)}(\e^{1/2}(x-t), \e^{1/2}y,\e^{1/2} z,\e^{3/2}t) +\e^2n^{(2)}+\e^3n^{(3)}\\
\phi^{\e}&=\e\,\phi^{(1)}(\e^{1/2}(x-t), \e^{1/2}y, \e^{1/2} z, \e^{3/2}t) +\e^2\phi^{(2)}+\e^3\phi^{(3)}\\
v_x^{\e}&=\e\,v_x^{(1)}(\e^{1/2}(x-t), \e^{1/2}y, \e^{1/2} z, \e^{3/2}t) +\e^2v_x^{(2)}+\e^3v_x^{(3)}\\
v_y^{\e}&=\e^{3/2}\,v_y^{(1)}(\e^{1/2}(x-t), \e^{1/2}y, \e^{1/2} z,\e^{3/2}t) +\e^2v_y^{(2)}+\e^{5/2}v_y^{(3)}\\
v_z^{\e}&=\e^{3/2}\,v_z^{(1)}(\e^{1/2}(x-t), \e^{1/2}y, \e^{1/2} z, \e^{3/2}t) +\e^2v_z^{(2)}+\e^{5/2}v_z^{(3)}.
\end{split}
\end{equation}
The asymptotic analysis of the Euler-Poisson system is easier to handle if we work with rescaled variables and unknowns adapted to this ansatz. More precisely, if we introduce
\begin{equation*}
\tilde{x}=\epsilon^{1/2} x,\quad \tilde{y}=\epsilon^{1/2}y,\quad  \tilde{z}=\epsilon^{1/2}z,\quad\tilde{t}=\epsilon ^{1/2} t,\quad
\tilde{n}=\epsilon n,\quad\tilde{\phi}=\epsilon \phi, \quad \tilde\bv=\epsilon \bv,
\end{equation*}
the Euler-Poisson equation  \eqref{eulerpoisson}  becomes (dropping the tilde superscripts),
\begin{equation}
    \label{scaEP}
    \left\lbrace
   \begin{array}{lcl}
   	\dsp\partial_tn+\nabla\cdot ((1+\eps n){\bf v}) =0,\\
	\dsp\partial_t{\bf v}+\eps ({\bf v}\cdot \nabla){\bf v}+\nabla\phi+a\eps^{-1/2}{\bf e}\wedge {\bf v}=0,\\
	\dsp -\epsilon ^2\Delta\phi+e^{\epsilon\phi}-1=\epsilon n,
    \end{array}\right.
\end{equation}
and the ZK equation is derived by looking for approximate solutions to this system under the form \footnote{Actually we will not use the third order profiles.}
\begin{equation}
\label{ZKansatz}
\begin{split}
n^{\e}&=\,n^{(1)}(x-t, y,z,\e t) +\e n^{(2)}+\e^2n^{(3)}\\
\phi^{\e}&=\,\phi^{(1)}(x-t, y,  z, \e t) +\e\phi^{(2)}+\e^2\phi^{(3)}\\
v_x^{\e}&=\,v_x^{(1)}(x-t, y, z, \e t) +\e v_x^{(2)}+\e^2v_x^{(3)}\\
v_y^{\e}&=\e^{1/2}\,v_y^{(1)}(x-t, y,  z,\e t) +\e v_y^{(2)}+\e^{3/2}v_y^{(3)}\\
v_z^{\e}&=\e^{1/2}\,v_z^{(1)}(x-t, y, z, \e t) +\e v_z^{(2)}+\e^{3/2}v_z^{(3)}.
\end{split}
\end{equation}
\subsection{The Cauchy problem revisited}
It is easily checked that when applied to \eqref{scaEP}, Theorem \ref{ivp} provides an existence time
which is of order $O(1)$
with respect to $\eps$, while the time scale $O(1/\eps)$ must be reached to observe the dynamics of the 
Zakharov-Kuznetsov
equation that occur along the slow time scale $\eps t$. \\
As explained in the Introduction, we therefore need, in order to justify the Zakharov-Kuznetsov equation as a long wave limit of the Euler-Poisson system, to solve the Cauchy problem associated to \eqref{scaEP} on a time interval of order $O(1/\eps)$. In order to do so, we will consider \eqref{scaEP} as a perturbation of a hyperbolic quasilinear system and give a proof which does not use the smoothing effect of the $\phi$ equation for a fixed $\epsilon$. This is the reason why more regularity is required on the initial data in the statement of
the theorem below. Before stating it, let us introduce the space $H^{s+1}_\eps$ which is the standard Sobolev space $H^{s+1}(\R^d)$ endowed with the norm 
\begin{equation}\label{defHseps}
\forall s\geq 0,\quad \forall f\in H^{s+1},\quad \abs{f}_{H^{s+1}_\eps}^2=\abs{f}_{H^s}^2+\eps\abs{\nabla f}_{H^s}^2;
\end{equation}
the presence of the small parameter $\eps$ in front of the second term is here to make this norm adapted
to measure the smoothing effects of the $\phi$ equation; the fact that these smoothing effects are small
explains why Theorem \ref{ivp}, which relies on them, provides an existence time much too small to observe
the dynamics of the Zakharov-Kuznetsov equation.
\begin{theorem}\label{thmIVPunif}
Let $s>\frac{d}{2}+1$ and $n_0\in H^{s}(\R^d)$, ${\bf v}_0 \in H^{s+1}(\R^d)^d$ such that $1+n_0\geq c_0$ for some $c_0>0$. \\
Then there exists $\underline{T}>0$ such that for all $\eps\in (0,1)$, there is 
 a unique solution $(n^\eps,{\bf v}^\eps,\phi^\e)\in C(\lbrack
 0,\frac{\underline{T}}{\eps}\rbrack; H^{s}(\R^d)\times
 H^{s+1}_\eps(\R^d)^d\times H^{s+1}(\R^d))$ of \eqref{scaEP} such that $1+\eps n^{\epsilon}>c_0/2$. \\
Moreover the family $(n^\eps,\bv^\eps,\nabla\phi^\eps)_{\eps\in (0,1)}$ is uniformly bounded in $H^s\times H^{s+1}_\eps\times H^{s-1}$.
\end{theorem}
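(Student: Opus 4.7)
The strategy is to view \eqref{scaEP} as a quasilinear system in $(n,\bv)$, with $\phi$ recovered from $n$ through the elliptic equation (as in Proposition~\ref{prop1} adapted to the scaling), and to derive energy estimates on the long time $1/\eps$ by means of a symmetrizer built to match the $H^s\times H^{s+1}_\eps$ structure. The guiding observation is that the linearization of \eqref{scaEP} around $(n,\bv)=(0,0)$ has principal symbol $i\begin{pmatrix}0 & \xi^T \\ \xi/(1+\eps|\xi|^2) & 0\end{pmatrix}$, since $\phi=(I-\eps\Delta)^{-1}n+O(\eps\phi^2)$ to leading order. This symbol becomes skew after left multiplication by $\mathrm{diag}(1,1+\eps|\xi|^2)$, i.e.\ by the operator $S_\eps=\mathrm{diag}(I,I-\eps\Delta)$, and its associated quadratic form reads $\langle S_\eps U,U\rangle=|n|_2^2+|\bv|_2^2+\eps|\nabla\bv|_2^2$ --- precisely the norm appearing in the theorem.

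For the nonlinear system, I would therefore introduce, for $|\alpha|\leq s$, the energy
\[
\mathcal{E}_s=\frac12\sum_{|\alpha|\leq s}\int\Bigl[(1+\eps n)|\partial^\alpha n|^2+(1+\eps n)|\partial^\alpha\bv|^2+\eps|\nabla\partial^\alpha\bv|^2\Bigr]dx,
\]
the $(1+\eps n)$ weights being the shallow-water analogue of the density symmetrizer. Differentiating in time and integrating by parts, the key mechanism is a cancellation between the cross-term from $(1+\eps n)\nabla\cdot\bv$ in the $n$ equation and the cross-term from $\nabla\phi$ in the $\bv$ equation, made possible by the identity $(I-\eps\Delta)\phi=n-\tfrac{\eps}{2}\phi^2+O(\eps^2)$ inherited from \eqref{elliptic} and by the extra $I-\eps\Delta$ built into $\mathcal{E}_s$. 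The residual terms are of three types: (i) Kato--Ponce commutators such as $[\partial^\alpha,1+\eps n]\nabla\cdot\bv$ and $\eps[\partial^\alpha,\bv\cdot\nabla]n$, each of size $\eps\,F(\mathcal{E}_s)$ uniformly in $\eps$; (ii) an elliptic remainder $\Lambda^s\bigl[(e^{\eps\phi}-1)/\eps-\phi\bigr]$, of size $\eps$ in $H^s$ by a Moser estimate combined with Proposition~\ref{prop1}, provided $|\eps\phi|_\infty$ stays bounded; (iii) cubic advection terms $\eps(\bv\cdot\nabla)\bv$, also $O(\eps)$. The Lorentz term $a\eps^{-1/2}\mathbf{e}\wedge\bv$ contributes $0$ pointwise against $(I-\eps\Delta)\partial^\alpha\bv$ (using that $\mathbf{e}\wedge\cdot$ commutes with $\nabla$), so its singular prefactor is harmless. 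Summing yields $\tfrac{d}{dt}\mathcal{E}_s\leq\eps\,F(\mathcal{E}_s)$, hence a lifespan of order $\underline T/\eps$ with $\underline T$ independent of $\eps$.

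The existence statement is then obtained by a standard regularization of \eqref{scaEP} --- for instance the frequency cutoff $\chi(jD)$ already used in the proof of Theorem~\ref{ivp} --- which reduces to an ODE in $H^s\times H^{s+1}_\eps$; carrying out the above estimates uniformly in the regularization parameter and in $\eps$, and passing to the limit by a compactness argument, yields $(n^\eps,\bv^\eps)$ on $[0,\underline T/\eps]$. The potential $\phi^\eps$ is reconstructed by Proposition~\ref{prop1}, which also provides the $C(H^{s+1})$ bound and, via the uniform estimate $|\xi|/(1+\eps|\xi|^2)\leq\langle\xi\rangle$, the uniform $H^{s-1}$ bound on $\nabla\phi^\eps$. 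Uniqueness follows from the same energy identity applied to the difference of two solutions.

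The main difficulty will be to check that every nonlinear and commutator contribution genuinely carries the advertised factor $\eps$, and in particular that the nonlocal corrections to $\nabla\phi$ coming from the full nonlinearity $e^{\eps\phi}$ --- which are most naturally controlled through Proposition~\ref{prop1} in Sobolev norms that degrade with $\eps$ --- do not spoil the uniform $(H^s,H^{s+1}_\eps)$ estimate when combined with the pseudodifferential symmetrizer $S_\eps$.
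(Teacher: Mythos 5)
Your overall strategy---viewing \eqref{scaEP} as a symmetrizable quasilinear system, building an energy adapted to the $H^s\times H^{s+1}_\eps$ structure, and closing a Gronwall inequality of the form $\frac{d}{dt}\mathcal{E}_s\leq \eps F(\mathcal{E}_s)$---is exactly the paper's. But your symmetrizer is miscalibrated, and the error is fatal for the $O(1/\eps)$ timescale. Write the top-order cross terms explicitly: testing the equation for $\partial^\alpha n$ against $w\,\partial^\alpha n$ produces $\big(w(1+\eps n)\nabla\cdot\partial^\alpha\bv,\partial^\alpha n\big)$, while testing the equation for $\partial^\alpha \bv$ against $T\partial^\alpha\bv$ produces $\big(T\nabla\partial^\alpha\phi,\partial^\alpha\bv\big)$, which equals $(\nabla\partial^\alpha n,\partial^\alpha\bv)=-(\partial^\alpha n,\nabla\cdot\partial^\alpha\bv)$ up to admissible remainders. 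Cancellation therefore forces $w(1+\eps n)=1$, i.e.\ $w=(1+\eps n)^{-1}$---which is the paper's choice in Step 2 of its proof---and not your $w=1+\eps n$. With your weight there survives the term
\[
\int\big((1+\eps n)^2-1\big)(\nabla\cdot\partial^\alpha\bv)\,\partial^\alpha n\;\approx\; 2\eps\int n\,(\nabla\cdot\partial^\alpha\bv)\,\partial^\alpha n ,
\]
and this is \emph{not} $O(\eps\,F(\mathcal{E}_s))$: since $\bv$ is only controlled in $H^{s+1}_\eps$, one has $\abs{\nabla\cdot\partial^\alpha\bv}_2\lesssim\eps^{-1/2}\abs{\bv}_{H^{s+1}_\eps}$ for $\abs{\alpha}=s$, so the term is of size $\eps^{1/2}\mathcal{E}_s$; integrating by parts only trades it for $\eps\int n\,\partial^\alpha\bv\cdot\nabla\partial^\alpha n$, which is worse since $\nabla\partial^\alpha n$ is not controlled at all. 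Gronwall then yields a lifespan of order $\eps^{-1/2}$, not $\eps^{-1}$, and the uniform bound of the theorem is lost. This is precisely the kind of failure you flag in your last sentence as "the main difficulty''; it is not a technicality but the crux of the proof.

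A second, related calibration point: the exact mechanism in the paper is that the operator $\Me=-\eps\Delta+e^{\eps\phi}$ obtained by differentiating the elliptic equation satisfies $\Me\nabla\phi=\nabla n$ \emph{identically}, so pairing the velocity equation with $\Me\bv$ and the density equation with $(1+\eps \underline n)^{-1}n$ makes the singular cross terms cancel with no remainder; $(\bv,\Me\bv)$ is moreover uniformly equivalent to $\abs{\bv}_{H^1_\eps}^2$. If instead you rely on the approximate identity $(I-\eps\Delta)\phi=n-\tfrac{\eps}{2}\phi^2+O(\eps^2)$, the correction $\tfrac{\eps}{2}\phi^2$, once hit by $\nabla\partial^\alpha$, contains $\eps\,\phi\,\nabla\partial^\alpha\phi$ with $\abs{\nabla\partial^\alpha\phi}_2\lesssim \eps^{-1/2}\abs{n}_{H^s}$ (Lemma \ref{lemmest} only gives $\nabla\phi\in H^s_\eps$), again an $O(\eps^{1/2})$ residue; your actual energy operator $(1+\eps n)-\eps\Delta$ differs from $\Me$ only by $\eps^2\Delta\phi$ and is acceptable, but the distinction must be made. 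Everything else in your outline (the harmlessness of the Lorentz term against this symmetrizer, the Moser/Kato--Ponce treatment of the remainders, regularization and passage to the limit versus the paper's linearized-estimate-plus-bootstrap) is sound and matches the paper up to cosmetic differences.
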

\begin{proof}
{\bf Step 1. Preliminary results.} If we differentiate the third equation of (\ref{scaEP}) with respect to $\partial_j$ ($j=x,y,z,t$), we get
\begin{equation}\label{linearize}
\Me\partial_j\phi=\partial_j n \quad\mbox{ with }\quad \Me=-\eps\Delta+e^{\eps\phi};
\end{equation}
the operator $\Me$ plays a central role in the energy estimates below, and we state here some basic 
estimates. The first is  that $(u,\Me u)^{1/2}$ defines a norm which is uniformly equivalent to $\abs{\cdot}_{H^1_\eps}$; more precisely, for all $u\in H^1(\R^d)$,
\begin{equation}\label{equiv}
(u,\Me u)\leq C(\abs{\phi}_\infty)\abs{u}_{H^1_\eps}^2
\quad\mbox{ and }\quad
\abs{u}_{H^1_\eps}^2\leq C(\abs{\phi}_{\infty})(u,\Me u).
\end{equation}
We also have for all $u\in H^1(\R^d)$ and $f\in W^{1,\infty}(\R^d)$,
$$
\big(u,f\Me u\big)\leq C(\abs{\phi}_\infty,\abs{f}_\infty,\sqrt{\eps}\abs{\nabla f}_\infty)\,\abs{u}_{H^1_\eps}^2,
$$
and, for all $u\in H^1(\R^d)$ and $f\in W^{2,\infty}(\R^d)$,
$$
\big( u,[f\partial_j,\Me] u\big)\leq C(\abs{\phi}_{W^{1,\infty}},\abs{f}_{W^{1,\infty}},\sqrt{\eps}\abs{\nabla \partial_jf}_\infty)\,\abs{u}_{H^1_\eps}^2\qquad (j=x,y,z);
$$
these two estimates are readily obtained from the definition of $\Me$ and integration by parts.\\
Let us finally prove that $\Me$ is invertible and give estimates on its inverse. 
\begin{lemma}\label{lemmest}
Let $\phi\in L^\infty(\R^d)$. Then $\Me:H^2(\R^d)\to L^2(\R^d)$ is an isomorphism and
$$
\forall v\in L^2(\R^d), \qquad e^{-\frac{\eps}{2}\abs{\phi}_\infty}\abs{\Me^{-1}v}_2+\sqrt{\eps} \abs{\nabla \Me^{-1}v}_2\leq e^{\frac{\eps}{2}\abs{\phi}_\infty}\abs{v}_2.
$$
If moreover $t_0>d/2$, $s\geq 0$ and $\phi\in H^{t_0+1}\cap H^{s}(\R^d)$, then we also have
$$
\forall v\in H^s(\R^d), \qquad \abs{\Me^{-1}v}_{H^s}+\sqrt{\eps} \abs{\nabla \Me^{-1}v}_{H^s}\leq C(\abs{\phi}_{H^{t_0+1}\cap H^s})\abs{v}_{H^s}.
$$
\end{lemma}
\begin{proof}[Proof of the lemma]
The invertibility property of $\Me$ follows classically from Lax-Milgram's theorem, and the first estimate of the lemma
follows from the coercivity property of $\Me$, namely,
\begin{equation}\label{coerc}
e^{-\eps\abs{\phi}_\infty}\abs{u}_2^2+\eps \abs{\nabla u}_2^2\leq (\Me u,u).
\end{equation}
In order to prove the higher order estimates, let us write $u=\Me^{-1}v$. We have by definition $(-\eps\Delta+e^{\eps\phi})u=v$, so that applying $\Lambda^s$ on both sides, we get
\begin{equation}\label{etcomm}
(-\eps\Delta+e^{\eps\phi})\Lambda^s u=\Lambda^s v-[\Lambda^s,e^{\eps\phi}]u.
\end{equation}
Using the first estimate, and recalling that $u=\Me^{-1}v$, we deduce that
$$
\abs{\Me^{-1}v}_{H^s}+\sqrt{\eps} \abs{\nabla \Me^{-1}v }_{H^s}\leq e^{\eps\abs{\phi}_\infty}\big(\abs{v}_{H^s}+\abs{[\Lambda^s,e^{\eps\phi}]\Me^{-1}v}_2\big).
$$
Now, the use Kato-Ponce and Coifman-Meyer commutator estimates, and Moser's inequality, shows that
\begin{eqnarray*}
\lefteqn{\abs{[\Lambda^s,e^{\eps\phi}]\Me^{-1}v}_2\leq\eps C(\abs{\phi}_\infty)}\\
&\times& \Big( \abs{\phi}_{H^{t_0+1}}\abs{\Me^{-1}v}_{H^{s-1}}
+\langle \abs{\phi}_{H^s}\abs{\Me^{-1}v}_{H^{t_0}}\rangle_{s>t_0+1}\Big),
\end{eqnarray*}
and we get therefore
\begin{eqnarray*}
\abs{\Me^{-1}v}_{H^s}+\sqrt{\eps} \abs{\nabla \Me^{-1}u }_{H^s}&\leq& C(\abs{\phi}_\infty)
\big(\abs{v}_{H^s}+\abs{\phi}_{H^{t_0+1}}\abs{\Me^{-1}v}_{H^{s-1}}\\
& &\indent\indent+\langle \abs{\phi}_{H^s}\abs{\Me^{-1}v}_{H^{t_0}}\rangle_{s>t_0+1}\big),
\end{eqnarray*}
and the results follows by a continuous induction.
\end{proof}

\noindent
{\bf Step 2. $L^2$ estimates for a linearized system.} Let $T>0$ and $(\underline{n},\underline{\bv})\in L^\infty([0,T];W^{1,\infty}(\R^d))\cap W^{1,\infty}([0,T];L^\infty(\R^d))$ be such that $1+\eps\underline{n}\geq c_0>0$ uniformly on $[0,T]$, and $ \underline{\phi}\in W^{1,\infty}$ with $\partial_t\underline{\phi}\in L^{\infty}.$ Let us consider a classical solution $(n,\bv,\phi)$ of the linear system
\begin{equation}
    \label{EPlin}
    \left\lbrace
   \begin{array}{lcl}
   	\dsp\partial_tn+(1+\eps\underline{n})\nabla\cdot{\bf v}+\eps\underline{\bv}\cdot \nabla n =\eps f,\\
	\dsp\partial_t{\bf v}+\eps (\underline{\bf v}\cdot \nabla){\bf v}+\nabla\phi+a\eps^{-1/2}{\bf e}\wedge {\bf v}=\eps {\bf g},\\
	\dsp \Meb \nabla\phi=\nabla n+\eps {\bf h}.
    \end{array}\right.
\end{equation}
We want to prove here that 
\begin{eqnarray}
\nonumber
\sup_{[0,T]}\big(\abs{n}_2^2+\abs{\bv}_{H^1_\eps}^2\big)& \leq& \exp(\eps C_0 T)\\
\label{EstL2}
&\times& \big(\abs{n_{\vert_{t=0}}}_2^2+ \abs{\bv_{\vert_{t=0}}}_{H^1_\eps}^2\big)+\eps T(\abs{f}^2_{L^2_T}+\abs{{\bf g}}^2_{H^1_{\eps,T}}+\abs{{\bf h}}_{L^2_T}^2)\big),
\end{eqnarray}
with $C_0=C(\frac{1}{c_0},\abs{(\underline{n},\underline{\bv},\phi)}_{W^{1,\infty}_T},\abs{\dt ( \underline{n}, \underline{\bv}, \underline{\phi})}_{L^\infty_T},\sqrt{\eps}\abs{\nabla\nabla\cdot \ubv}_{L^\infty_T})$.\\
Taking the $L^2$-scalar product with $\frac{1}{1+\eps\underline{n}}n$, we get
$$
\big(\frac{1}{1+\eps\un}\dt n,n\big)+\big(\nabla\cdot \bv,n\big)+\eps\big(\frac{\underline{\bv}}{1+\eps\un}\cdot\nabla n,n)=\eps \big(\frac{f}{1+\eps\un},n\big),
$$
which can be rewritten under the form
\begin{eqnarray}
\nonumber
\frac{1}{2}\dt \big(\frac{1}{1+\eps\un}n,n\big)+\eps\frac{1}{2}\big(\frac{\dt \un}{(1+\eps\un)^2}n,n\big)
+(\nabla\cdot \bv,n)\\
 \label{estL21}
-\eps\frac{1}{2}\big(n,\nabla\cdot(\frac{\underline{\bv}}{1+\eps\un})n\big)
=\eps \big(\frac{f}{1+\eps\un},n\big).
\end{eqnarray}
We now  take the scalar product of the second equation with $\Meb \bv$ to obtain, after recalling that $\Meb \nabla\phi= \nabla n+\eps{\bf h}$, 
\begin{eqnarray*}
\big(\Meb\dt \bv,\bv\big)+\eps\big(\Meb\underline{\bv}\cdot\nabla \bv,\bv\big)
+\big(\nabla n,\bv\big)\\
=-\eps\big({\bf h},\bv\big)+\eps \big(\Meb{\bf g},\bv\big),
\end{eqnarray*}
or equivalently
\begin{eqnarray}
\nonumber
\lefteqn{\frac{1}{2}\dt \big(\Meb\bv,\bv\big)-\frac{1}{2}\eps\big(\dt \underline{\phi}e^{\eps\underline{\phi}}\bv,\bv\big)
-\frac{1}{2}\eps\big(\bv,(\nabla\cdot\ubv)\Meb \bv\big)}\\
& &-\frac{1}{2}\eps \big(\bv,[\ubv\cdot\nabla,\Meb]\bv\big)+(\nabla n,\bv)
\label{estL22}
=-\eps\big({\bf h},\bv\big)+\eps \big(\Meb{\bf g},\bv\big).
\end{eqnarray}
Adding (\ref{estL22}) to (\ref{estL21}), we get therefore
\begin{eqnarray*}
\lefteqn{\frac{1}{2}\dt \big(\frac{1}{1+\eps\un}n,n\big)+\frac{1}{2}\dt \big(\Meb\bv,\bv\big)=
\eps\frac{1}{2}\big([\frac{\dt \un}{(1+\eps\un)^2}+\nabla\cdot(\frac{\underline{\bv}}{1+\eps\un})]n,n\big)}\\
& &+\frac{1}{2}\eps\big(\dt \underline{\phi}e^{\eps\underline{\phi}}\bv,\bv\big)
+\frac{1}{2}\eps\big(\bv,(\nabla\cdot\ubv)\Meb \bv\big)+\frac{1}{2}\eps\big(\bv,[\ubv\cdot\nabla,\Meb]\bv\big)\\
& &+\eps \big(\frac{1}{1+\eps\un}f,n\big)-\eps\big({\bf h},\bv\big)+\eps \big(\Meb{\bf g},\bv\big).
\end{eqnarray*}
All the terms on the right-hand-side are easily controlled (with the help of Step 1 for third and fourth terms) to obtain 
\begin{eqnarray*}
\dt \big\{ \!\big(\frac{n}{1+\eps\un},n\big)\!\!+\!\! \big(\Meb\bv,\bv\big)\!\big\}\!\leq \eps C(\frac{1}{c_0}\!,\abs{(\underline{n},\underline{\bv},\underline{\phi})}_{W^{1,\infty}},\abs{\dt ( \underline{n}, \underline{\bv},\underline{\phi})}_\infty,\eps \abs{\nabla \nabla\!\cdot\!\ubv}_{2})\\
\times\big( (\frac{1}{1+\eps\un}n,n)+\abs{\bv}_{H^1_\eps}^2+\abs{f}_2^2+\abs{{\bf g}}_{H^1_\eps}^2+\abs{{\bf h}}_2^2\big).
\end{eqnarray*}
Using (\ref{equiv}) and a Gronwall inequality, we readily deduce (\ref{EstL2}). 

\noindent
{\bf Step 3. $H^s$ estimates for a linearized system.} We want to prove here that 
the solution $(n,\bv,\phi)$ to (\ref{EPlin}) satisfies, for all $s\geq t_0+1$,
\begin{eqnarray}
\nonumber
\sup_{[0,T]}\big(\abs{n}_{H^s}^2+\abs{ \bv}_{H^{s+1}_\eps}^2\big)& \leq& 
\exp(\eps C_s T) \times \Big(\abs{n_{\vert_{t=0}}}_{H^s}^2+ \abs{ \bv_{\vert_{t=0}}}_{H^{s+1}_\eps}^2\big)\\
\label{ests}
& &\indent+\eps T(\abs{f}^2_{H^s_T}+\abs{{\bf g}}^2_{{H}^{s+1}_{\eps,T}}+\abs{ {\bf h}}_{H^s_T}^2+\abs{n}_{H^s}^2+\abs{ \bv}_{H^{s+1}_\eps}^2)\Big),
\end{eqnarray}
with $C_s= C(\frac{1}{c_0},\abs{\un}_{H^s_T},\abs{\ubv}_{H^{s+1}_{\eps,T}},\abs{\underline{\phi}}_{H^s_T},\abs{\dt ( \underline{n}, \underline{\bv}, \underline{\phi})}_{L^\infty_T})$.\\
Applying $\Lambda^s$ to the three equations of (\ref{EPlin}), and writing $\tilde n=\Lambda^s n$, $\tilde \bv=\Lambda^s \bv$, and $\tilde\phi=\Lambda^s \phi$, we get
\begin{equation}
    \label{EPlinHs}
    \left\lbrace
   \begin{array}{lcl}
   	\dsp\partial_t\tilde n+(1+\eps \underline{n})\nabla\cdot \tilde{\bf v} +\eps\ubv\cdot\nabla\tilde n=\eps\tilde f,\\
	\dsp\partial_t\tilde{\bf v}+\eps (\underline{\bf v}\cdot \nabla)\tilde{\bf v}+\nabla\tilde\phi+a\eps^{-1/2}{\bf e}\wedge \tilde {\bf v}=\eps \tilde {\bf g},\\
	\dsp \Meb\nabla\tilde\phi=\nabla\tilde n+\eps\tilde {\bf h}.
    \end{array}\right.
\end{equation}
with 
\begin{eqnarray*}
\tilde f&=&\Lambda^s f- [\Lambda^s,\underline{n}]\nabla\cdot \bv-[\Lambda^s,\ubv]\cdot\nabla n,\\
\tilde {\bf g}&=&\Lambda^s {\bf g}-[\Lambda^s,\underline{\bv}]\cdot\nabla \bv,\\
\tilde{\bf h}&=&\Lambda^s{\bf h}-\frac{1}{\eps}[\Lambda^s,\Meb]\nabla\phi.
\end{eqnarray*}
From Step 2, we get that
\begin{eqnarray*}
\sup_{[0,T]}\big(\abs{\tilde n}_2^2+\abs{\tilde \bv}_{H^1_\eps}^2\big) \leq 
\exp (\eps C_0 T) \\
\times \big(\abs{\tilde n_{\vert_{t=0}}}_2^2+ \abs{\tilde \bv_{\vert_{t=0}}}_{H^1_\eps}^2\big)+\eps T(\abs{\tilde f}^2_{L^2_T}+\abs{\tilde {\bf g}}^2_{H^1_{\eps,T}}+\abs{\tilde {\bf h}}_{L^2_T}^2)\big).
\end{eqnarray*}
Now, by standard commutator estimates, we have for all $s\geq t_0+1$,
$$
\abs{\tilde f}_2+\abs{\tilde{\bf g}}_{H^1_\eps}\leq \abs{f}_{H^s}+\abs{{\bf g}}_{H^{s+1}_\eps}+
\big(\abs{\un}_{H^s}+\abs{\ubv}_{H^{s+1}_\eps}\big)\times \big(\abs{\bv}_{H^{s+1}_\eps}+\abs{n}_{H^s}\big)
$$
and
\begin{eqnarray*}
\abs{\tilde{\bf h}}_2&\leq& \abs{{\bf h}}_{H^s}+C(\abs{\underline{\phi}}_{H^s})\abs{\nabla\phi}_{H^{s-1}},\\
&\leq& C(\abs{\underline{\phi}}_{H^s})\big(\abs{{\bf h}}_{H^s}+\abs{n}_{H^s}\big),
\end{eqnarray*}
where we used Lemma \ref{lemmest} to get the last inequality. We can now directly deduce (\ref{ests})
from the Sobolev embedding $W^{1,\infty}(\R^d)\subset H^s(\R^d)$ ($s\geq t_0+1$).

\noindent
{\bf Step 4. End of the proof.} The exact solution provided by Theorem \ref{ivp} solves (\ref{EPlin})
with $(\un,\underline{\bv},\underline{\phi})=(n,\bv,\phi)$ and $f=0$, ${\bf g}=0$, ${\bf h}=0$;
we deduce therefore from Step 3 that it
satisfies the estimate
$$
\sup_{[0,T]}\big(\abs{n}_{H^s}^2+\abs{ \bv}_{H^{s+1}_\eps}^2\big)\leq 
\exp(\eps  C_s T) \times \Big(\abs{n_{\vert_{t=0}}}_{H^s}^2+ \abs{ \bv_{\vert_{t=0}}}_{H^{s+1}_\eps}^2\big)
+\eps T(\abs{n}_{H^s}^2+\abs{ \bv}_{H^{s+1}_\eps}^2)\Big),
$$
with 
\begin{eqnarray*}
C_s&=& C(\frac{1}{c_0},\abs{n}_{H^s_T},\abs{\bv}_{H^{s+1}_{\eps,T}},\abs{{\phi}}_{H^s},\abs{\dt ( {n}, {\bv}, \phi)}_{L^\infty_T})\\
&=& C(\frac{1}{c_0},\abs{n}_{H^s_T},\abs{\bv}_{H^{s+1}_{\eps,T}}),
\end{eqnarray*}
where we used (\ref{linearize}) and the equation to control the time derivatives in terms of space derivatives,
and Proposition \ref{prop1} to control the $L^2$-norm of $\phi$ (control on $\nabla\phi$ being provided by Lemma \ref{lemmest}). This provides us with a $\underline{T}>0$ such that $\abs{n}_{H^s}^2+\abs{ \bv}_{H^{s+1}_\eps}^2$ remains uniformly bounded with respect to $\eps$ on $[0,\underline{T}/\eps]$.
\end{proof}

\subsection{The ZK approximation to the Euler-Poisson system}\label{ZKA}

We construct here an approximate solution to (\ref{scaEP}) based on the ZK equation.
Following Laedke and Spatschek  \cite{LS}, but with the rescaled variables and unknowns introduced
in (\ref{rescaled}),  we look for approximate solutions of \eqref{scaEP} under the form (\ref{ZKansatz}),
namely,
\begin{equation}
\label{solapp}
\begin{split}
n^{\e}&=\,n^{(1)}(x-t, y,z,\e t) +\e n^{(2)}\\
\phi^{\e}&=\,\phi^{(1)}(x-t, y,  z, \e t) +\e\phi^{(2)}\\
v_x^{\e}&=\,v_x^{(1)}(x-t, y, z, \e t) +\e v_x^{(2)}\\
v_y^{\e}&=\e^{1/2}\,v_y^{(1)}(x-t, y,  z,\e t) +\e v_y^{(2)}\\
v_z^{\e}&=\e^{1/2}\,v_z^{(1)}(x-t, y, z, \e t) +\e v_z^{(2)}.
\end{split}
\end{equation}
\begin{notation}
We will denote by $X$ the variable $x-t$ and by $T$ the slow time variable $\eps t$.
\end{notation}
Plugging this ansatz into the first equation of \eqref{scaEP} we obtain:
\begin{equation}\label{2}
\dt n^{\e}+\nabla\cdot\big(1+\eps n^{\e})\bv^{\e}= \sum_{j=0}^6
\e^{j/2} N^j,
\end{equation}
where
\begin{equation*}
\begin{split}
N^0&=-\frac{\partial} {\partial X}n^{(1)}+\frac{\partial}{\partial X}v_x^{(1)}\\
N^1&=\frac{\partial} {\partial y}v_y^{(1)}+\frac{\partial}{\partial z}v_z^{(1)}\\
N^2&=\frac{\partial} {\partial T}n^{(1)}-\frac{\partial}{\partial X}n^{(2)}
+\frac{\partial} {\partial X}(n^{(1)}v_x^{(1)})
+\frac{\partial} {\partial X}v_x^{(2)}+\frac{\partial} {\partial y}v_y^{(2)}
+\frac{\partial} {\partial z}v_z^{(2)}\\
N^3&=
\frac{\partial} {\partial y}(n^{(1)}v_y^{(1)})
+\frac{\partial} {\partial z}(n^{(1)}v_z^{(1)})\\
N^4&=\frac{\partial} {\partial T}n^{(2)}+\frac{\partial} {\partial X}(n^{(1)}v_x^{(2)})
+\frac{\partial} {\partial y}(n^{(1)}v_y^{(2)})\\
&\;\;\;+\frac{\partial} {\partial z}(n^{(1)}v_z^{(2)})+\frac{\partial} {\partial X}(n^{(2)}v_x^{(1)}),\\
N^5&=\frac{\partial}{\partial y}(n^{(2)}v_y^{(1)})+\frac{\partial}{\partial z}(n^{(2)}v_z^{(1)}),\\
N^6&=\frac{\partial}{\partial X}(n^{(2)}v_x^{(2)})+\frac{\partial}{\partial y}(n^{(2)}v_y^{(2)})+\frac{\partial}{\partial z}(n^{(2)}v_z^{(2)}).
\end{split}
\end{equation*}
Similarly, one has
\begin{equation}\label{3}
\frac{\partial} {\partial t}v_x^{\e}+\eps v_x^{\e}\frac{\partial} {\partial x}v_x^{\e}
+\eps  v_y^{\e}\frac{\partial} {\partial y}v_x^{\e} +\eps  v_z^{\e}\frac{\partial} {\partial z}v_x^{\e}
+\frac{\partial} {\partial x}\phi^{\e}=\sum_{j=0}^6 \e^{j/2}R^1_j 
\end{equation}
with $R_1^1=0$ and
\begin{equation*}
\begin{split}
R_0^1&=-\frac{\partial} {\partial X}v_x^{(1)}+\frac{\partial} {\partial X}\phi^{(1)}\\
R_2^1&=\frac{\partial} {\partial T}v_x^{(1)}-\frac{\partial} {\partial X}v_x^{(2)}
+v_x^{(1)}\frac{\partial} {\partial X}v_x^{(1)}+\frac{\partial} {\partial X}\phi^{(2)}\\
R_3^1&=v_y^{(1)}\frac{\partial} {\partial y} v_x^{(1)}+v_z^{(1)}\frac{\partial} {\partial z}v_x^{(1)}\\
R_4^1&=\frac{\partial} {\partial T}v_x^{(2)}
+\frac{\partial} {\partial X}(v_x^{(1)}v_x^{(2)})+v_y^{(2)}\frac{\partial} {\partial y}v_x^{(1)}+
v_z^{(2)}\frac{\partial} {\partial z }v_x^{(1)}\\
R_5^1&=v_y^{(1)}\frac{\partial}{\partial y}v_x^{(2)} +v_z^{(1)}\frac{\partial}{\partial z}v_x^{(2)}  \\
R_6^1&=v_y^{(2)}\frac{\partial}{\partial y}v_x^{(2)}+v_z^{(2)}\frac{\partial}{\partial z}v_x^{(2)}+v_x^{(2)}\frac{\partial}{\partial X}v_x^{(2)};
\end{split}
\end{equation*}
for the second component of the velocity equation, we get
\begin{equation}\label{4}
\frac{\partial} {\partial t}v_y^{\e}+\eps v_x^{\e}\frac{\partial} {\partial x}v_y^{\e}
+\eps  v_y^{\e}\frac{\partial} {\partial y}v_y^{\e} +\eps  v_z^{\e}\frac{\partial} {\partial z}v_y^{\e}
+\frac{\partial} {\partial y}\phi^{\e}-a\eps^{-1/2}\,v_z^{\e}
=\sum_{j=0}^6 \e^{j/2}R^2_j
\end{equation}
with
\begin{equation*}
\begin{split}
R_0^2&=\frac{\partial} {\partial y}\phi^{(1)}-a v_z^{(1)}\\
R_1^2&=-(\frac{\partial} {\partial X}v_y^{(1)}+a v_z^{(2)})\\
R_2^2&=-\frac{\partial} {\partial X}v_y^{(2)}+\frac{\partial} {\partial y}\phi^{(2)}\\
R_3^2&=\frac{\partial} {\partial T}v_y^{(1)}+v_x^{(1)}
\frac{\partial} {\partial X}v_y^{(1)}\\
R_4^2&=\frac{\partial} {\partial T}v_y^{(2)}+v_x^{(1)}\frac{\partial} {\partial X}v_y^{(2)}+
v_y^{(1)}\frac{\partial} {\partial y}v_y^{(1)}+v_z^{(1)}\frac{\partial} {\partial z}v_y^{(1)}\\
R_5^2&=v_x^{(2)}\frac{\partial}{\partial X}v_y^{(1)}+\frac{\partial}{\partial y}(v_y^{(1)} v_y^{(2)})+v_z^{(1)}\frac{\partial}{\partial z}v_y^{(2)}+v_z^{(2)}\frac{\partial}{\partial z}v_y^{(1)}\\
R_6^2&=v_x^{(2)}\frac{\partial}{\partial
  X}v_y^{(2)}+v_y^{(2)}\frac{\partial}{\partial
  y}v_y^{(2)}+v_z^{(2)}\frac{\partial}{\partial z}v_y^{(2)},
\end{split}
\end{equation*}
while for the third component, the equations are
\begin{equation}\label{5}
\frac{\partial} {\partial t}v_z^{\e}+\e v_x^{\e}\frac{\partial} {\partial x}v_z^{\e}
+ \e v_y^{\e}\frac{\partial} {\partial y}v_z^{\e} + \e v_z^{\e}\frac{\partial} {\partial z}v_z^{\e}
+\frac{\partial} {\partial z}\phi^{\e}+a\e^{-1/2}\,v_y^{\e}=
\sum_{j=0}^6 \e^{j/2} R_j^3
\end{equation}
with
\begin{equation*}
\begin{split}
R_0^3&=\frac{\partial} {\partial z}\phi^{(1)}+a v_y^{(1)}\\
R_1^3&=-\frac{\partial} {\partial X}v_z^{(1)}+a v_y^{(2)}\\
R_2^3&=-\frac{\partial} {\partial X}v_z^{(2)}+\frac{\partial} {\partial z}\phi^{(2)}\\
R_3^3&=\frac{\partial} {\partial T}v_z^{(1)}+v_x^{(1)}\frac{\partial} {\partial X}v_z^{(1)}\\
R_4^3&=\frac{\partial} {\partial T}v_z^{(2)}+v_x^{(1)}\frac{\partial} {\partial X}v_z^{(2)}+
v_y^{(1)}\frac{\partial} {\partial y}v_z^{(1)}+v_z^{(1)}\frac{\partial} {\partial z}v_z^{(1)}\\
R_5^3&=v_x^{(2)}\frac{\partial}{\partial X}v_z^{(1)}+\frac{\partial}{\partial z}(v_z^{(1)}v_z{(2)})+v_y^{(1)}\frac{\partial}{\partial y}v_z^{(2)}+v_y^{(2)}\frac{\partial}{\partial y}v_z^{(1)}\\
R_6^3&=v_x^{(2)}\frac{\partial}{\partial X}v_z^{(2)}+v_y^{(2)}\frac{\partial}{\partial y}v_z^{(2)}+v_z^{(2)}\frac{\partial}{\partial z}v_z^{(2)}.
\end{split}
\end{equation*}
Finally, for the equation on the potential, we obtain
\begin{equation}\label{6}
-\e^2\Delta \phi^{\e}+e^{\e\phi}-1-\e n =\e r^2+\e^2 r^4+\e^3 r^6+O(\e^4)
\end{equation}
with
\begin{equation*}
\begin{split}
r^2&=\phi^{(1)}-n^{(1)}\\
r^4&=-\Delta \phi^{(1)}+\phi^{(2)}+\frac12(\phi^{(1)})^2-n^{(2)}\\
r^6&=-\Delta \phi^{(2)}+\phi^{(1)}\phi^{(2)}+\frac{1}{6}(\phi^{(1)})^3.
\end{split}
\end{equation*}

We first derive (following essentially \cite{LS}) the equations
corresponding to the successive cancellation of the leading order
remainder terms; we then show that they imply that $n^{(1)}$ must
solve the Zakharov-Kuznetsov equation, and then turn to show  in the
spirit of Ben-Youssef and Colin who considered the one-dimensional
case in  \cite{C-BY} (see also \cite{GuoPu}) that it
is indeed possible to construct an approximate solution \eqref{solapp}
satisfying all the cancellation conditions previously derived. The
consequence is the consistency property of \eqref{solapp} stated in
Proposition \ref{propconsist}.

\subsubsection{Cancellation of terms of order zero in  $\e$}

Canceling the terms $N^0$ and $R^j_0$ ($j=1,2,3$) is equivalent to the
following conditions,
\begin{align}
\label{paq0}
v_x^{(1)}&=\phi^{(1)}=n^{(1)} \qquad \text{(assuming that}
\;v_x^{(1)},  \; \phi^{(1)},\;n^{(1)}\;\text{vanish as}\;\;|X|\to
+\infty)\\
\label{paq1}
v_y^{(1)}&=-\frac{1}{a}\partial_z n^{(1)}\\
\label{paq2}
v_z^{(1)}&=\frac{1}{a}\partial_y n^{(1)}.
\end{align}

\subsubsection{Cancellation of terms of order  $\e^{1/2}$}

Using \eqref{paq1}-\eqref{paq2}, the cancellation of the terms $N^1$ and
$R^j_1$ ($j=1,2,3$) is equivalent to
\begin{equation}\label{paq3}
v_y^{(2)}=\frac{1}{a^2}\partial_{Xy}^2 n^{(1)}\quad \mbox{ and }\quad
v_z^{(2)}=\frac{1}{a^2}\partial_{Xz}^2 n^{(1)}.
\end{equation}

\subsubsection{Cancellation of terms of order  $\e$}\label{secteps1}
Using the conditions derived above, the cancellation of $N^2$ and
$R^j_2$ ($j=1,2,3$),
\begin{align}
\label{paq4}
\partial_T n^{(1)}+2n^{(1)}\partial_
Xn^{(1)}+\frac{1}{a^2}\partial_X\Delta
n^{(1)}&=-\partial_X(v_x^{(2)}-n^{(2)})\\
\label{paq5}
\partial_X(v_x^{(2)}-\phi^{(2)})&=\partial_Tn^{(1)}+n^{(1)}\partial_
Xn^{(1)}\\
\label{paq6}
\partial_y\phi^{(2)}&=\frac{1}{a^2}\partial^3_{XXy}n^{(1)}\\
\label{paq7}
\partial_z\phi^{(2)}&=\frac{1}{a^2}\partial^3_{XXz}n^{(1)}
\end{align}
while the cancellation of $r_2$ is equivalent to $\phi^{(1)}=n^{(1)}$
which has already been imposed.
\subsubsection{Cancellation of terms of order  $\e^{3/2}$}\label{secteps32}
It is possible to cancel the terms of order $\e^{3/2}$ for the
equation on the density and on the first component of the velocity;
the fact that $N^3=R^1_3=0$ is actually a direct consequence of
\eqref{paq0}-\eqref{paq2} and\eqref{paq3}. Looking for the
cancellation of the other components of the velocity equation, namely,
setting $R^2_3=R^3_3=0$ yields respectively
\begin{align*}
0&=-\frac{1}{a}\lbrack \partial^2_{zT}n^{(1)}+n^{(1)}\partial^2_{zX}n^{(1)} \rbrack ,\\
0&=\frac{1}{a}\lbrack \partial^2_{yT}n^{(1)}+n^{(1)}\partial^2_{yX}n^{(1)} \rbrack\ ,
\end{align*}
which are inconsistent with the other equations on $n^{(1)}$;
consequently, {\it we cannot expect a better error than $O(\e^{3/2})$
  on the equations for the transverse components of the velocity}.

\subsubsection{Cancellation of terms of order  $\e^2$}\label{secteps2}

In order to justify the ZK approximation, we need to cancel the
$O(\eps^2)$ terms in the equation for $\phi^\eps$, that is, to impose
$r^4=0$, leading to the equation  
\begin{align}
\label{paq9}
\phi^{(2)}-n^{(2)}=\Delta n^{(1)}-\frac{1}{2}(n^{(1)})^2.
\end{align}

\subsubsection{Derivation of the Zakharov-Kuznetsov equation}
Combining (\ref{paq4}), (\ref{paq5}) and (\ref{paq9}), we find that
$n^{(1)}$ must solve the Zakharov-Kuznetsov equation,
\begin{equation}\label{ZKapp}
2\partial_T n^{(1)}+2n^{(1)}\partial_Xn^{(1)}+(\frac{1}{a^2}+1)\Delta \partial_X n^{(1)}=0.
\end{equation}

\subsubsection{Construction of the profiles}
The ZK equation being locally well posed on $H^s(\R^d)$, for all
$s>d/2+1$, \footnote{ See \cite{Fa, LiS, RV} for the Cauchy theory  in larger spaces.} we can consider a solution $n^{(1)}\in C(\lbrack
0,T_0\rbrack; H^s(\R^d)),$ $s \geq5$ to \eqref{ZKapp}
for some $T_0>0$. We show here how to construct all the quantities
involved in \eqref{solapp} in terms of $n^{(1)}$.
\begin{itemize}
\item In agreement with \eqref{paq0}, we set
  $\phi^{(1)}=v_x^{(1)}=n^{(1)}$.
\item Equations \eqref{paq1}-\eqref{paq2} then give $v_y^{(1)},
  v_z^{(1)}\in C(\lbrack 0,T_0\rbrack; H^{s-1}(\R^d))$.
\item We then use  \eqref{paq3} to obtain $v_y^{(2)},v_z^{(2)} \in
  C(\lbrack 0,T_0\rbrack; H^{s-2}(\R^d))$.
\item Taking $\phi^{(2)}=\frac{1}{a^2}\partial_{XX}^2 n^{(1)}\in
  C(\lbrack 0,T_0\rbrack; H^{s-2}(\R^d))$ then
  ensures that \eqref{paq6} and \eqref{paq7} hold.
\item We then get the density corrector $n^{(2)}\in C(\lbrack
  0,T_0\rbrack; H^{s-2}(\R^d))$ by \eqref{paq9}.
\item We recover $v_x^{(2)}\in C(\lbrack
  0,T_0\rbrack; H^{s-2}(\R^d))$ from \eqref{paq4} or \eqref{paq5} --- the
  fact that $n^{(1)}$ solves the ZK equation ensures that we find the
  same expression with \eqref{paq4} or \eqref{paq5} .
\end{itemize}

\medbreak

The computations above and the explicit expression of the remaining
residual terms in \eqref{2}, \eqref{3}, \eqref{4}, \eqref{5} and
\eqref{6} imply the following consistency result.
\begin{proposition}\label{propconsist}
Let $T_0>0$, $n_0\in H^s(\R^d)$ ($s\geq 5$) and $n^{(1)}\in C(\lbrack 0,
T_0\rbrack;H^s(\R^d))$, solving
$$2\partial_T
n^{(1)}+2n^{(1)}\partial_Xn^{(1)}+\left(1+\frac{1}{a^2}\right)\Delta \partial_Xn^{(1)}=0,\qquad n^{(1)}_{\vert_{t=0}}=n_0.$$
Constructing the other profiles as indicated above, the approximate
solution 
 $(n^\eps,\bv^\eps,\phi^\eps)$ given by \eqref{solapp} solves
 (\ref{scaEP}) up to order $\eps^3$ in $\phi^\eps$, $\eps^2$ in $n^\eps$, $v_x^\eps$,
 and up to order $\eps^{3/2}$ in $v_y^\eps,$ $v_z^\eps$:
\begin{equation}
    \label{scaEPquar}
    \left\lbrace
   \begin{array}{lcl}
   	\dsp\partial_tn^\e+\nabla\cdot ((1+\eps n^\e){\bf v}^\e)
        =\e^{2} N^\e,\\
	\dsp\partial_t{\bf v}^\e+\eps ({\bf v}^\e\cdot \nabla){\bf
          v}^\e+\nabla\phi^\e+a\eps^{-1/2}{\bf e}\wedge {\bf
          v}^\e=\eps^{3/2} R^\e,\\
	\dsp -\epsilon
        ^2\Delta\phi^\e+e^{\epsilon\phi^\e}-1=\eps n^\e+ \epsilon^3 r^\e,
    \end{array}\right.
\end{equation}
with $R^\eps=(\eps^{1/2} R^\e_1,R^\e_2,R^\e_3)$ and
$$
\abs{N^\e}_{L^\infty([0,\frac{T_0}{\eps}];H^{s-5})} +\sum_{j=1}^3\abs{R^\e_j}_{L^\infty([0,\frac{T_0}{\eps}];H^{s-5})}  +\abs{r^\eps}_{L^\infty([0,\frac{T_0}{\eps}];H^{s-4})}\leq C\big(T_0,\abs{n_0}_{H^s}\big).
$$
\end{proposition}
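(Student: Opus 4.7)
The plan is to substitute the ansatz \eqref{solapp} into each equation of the scaled Euler-Poisson system \eqref{scaEP}, expand in half-integer powers of $\eps$, and verify that the profiles constructed above cancel all terms below the stated orders. For the continuity equation, the cancellations $N^0=N^1=N^2=0$ follow directly from \eqref{paq0}--\eqref{paq7} and from the definitions of $n^{(2)}, v_x^{(2)}$; the cancellation $N^3=0$ is a short direct computation using $v_y^{(1)}=-\frac{1}{a}\partial_z n^{(1)}$ and $v_z^{(1)}=\frac{1}{a}\partial_y n^{(1)}$, which turns $n^{(1)}v_y^{(1)}$ and $n^{(1)}v_z^{(1)}$ into $\mp \frac{1}{2a}\partial_{y,z}(n^{(1)})^2$, whose $\partial_y$ and $\partial_z$ derivatives cancel. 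The residual is therefore $\eps^2 N^\eps$ with $N^\eps=N^4+\eps^{1/2}N^5+\eps N^6$. The same argument gives an $\eps^2 R^\eps_1$ residual for the $v_x$ equation (using $R^1_1=0$ and $R^1_3=0$ by analogous computations), and $\eps^{3/2}R^\eps_{2,3}$ residuals for the transverse velocities (using \eqref{paq1}--\eqref{paq3} and \eqref{paq6}--\eqref{paq7}); the $\eps^{3/2}$ ceiling for these components is exactly the obstruction already pointed out in subsection on cancellation at order $\eps^{3/2}$.

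For the Poisson equation, I Taylor-expand $e^{\eps\phi^\eps}=1+\eps\phi^\eps+\frac{1}{2}\eps^2(\phi^\eps)^2+\frac{1}{6}\eps^3(\phi^\eps)^3+O(\eps^4)$, substitute $\phi^\eps=\phi^{(1)}+\eps\phi^{(2)}$, and regroup, producing the hierarchy $\eps r^2+\eps^2 r^4+\eps^3 r^6+O(\eps^4)$ displayed in \eqref{6}. The choice $\phi^{(1)}=n^{(1)}$ kills $r^2$, the choice \eqref{paq9} kills $r^4$, and the remaining term is $\eps^3 r^\eps$ with $r^\eps=r^6+O(\eps)$.

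It remains to bound the residuals in the Sobolev norms claimed. Reading off the regularities from the construction bullet points: $n^{(1)}\in C([0,T_0];H^s)$ by hypothesis on the ZK solution; \eqref{paq1}--\eqref{paq2} put $v_y^{(1)},v_z^{(1)}\in C([0,T_0];H^{s-1})$; and \eqref{paq3}, \eqref{paq9}, together with the definitions of $\phi^{(2)}$ and $v_x^{(2)}$, put $\phi^{(2)}, n^{(2)}, v_{x,y,z}^{(2)}\in C([0,T_0];H^{s-2})$. Slow time derivatives cost extra spatial derivatives: $\partial_T n^{(1)}$ is given by \eqref{ZKapp}, hence belongs to $C([0,T_0];H^{s-3})$, and $\partial_T$ of the correctors costs two more through the construction formulas. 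Applying the algebra property of $H^s$ for $s>d/2$ together with Moser's inequality to the explicit expressions of $N^4,\dots,N^6$, $R^j_4,\dots,R^j_6$, $r^6$, and counting the maximal number of derivatives involved, yields the claimed $H^{s-5}$ bound for $N^\eps$ and the $R^\eps_j$, and the (milder) $H^{s-4}$ bound for $r^\eps$. Since the ZK flow map gives $|n^{(1)}|_{C([0,T_0];H^s)}\leq C(T_0,|n_0|_{H^s})$, the same bound controls every profile, and translating the slow time $T\in[0,T_0]$ into the original time gives the stated uniformity on $[0,T_0/\eps]$.

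The proof presents no real analytical obstacle: the difficulty is purely combinatorial bookkeeping, and the reason it succeeds with exactly these orders is that the profiles have been chosen to make the cancellations happen. The only mild subtlety is checking that $N^3$ and $R^1_3$ vanish identically (so that the density and longitudinal velocity residuals are actually $O(\eps^2)$ rather than $O(\eps^{3/2})$), which is what allows the $\eps^2$ accuracy in those equations despite the weaker $\eps^{3/2}$ accuracy forced on the transverse velocities.
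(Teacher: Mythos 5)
Your proposal is correct and follows essentially the same route as the paper: the paper's proof of Proposition \ref{propconsist} consists precisely of the expansion of the ansatz in half-integer powers of $\eps$ carried out in \S\ref{ZKA}, the cancellation conditions \eqref{paq0}--\eqref{paq9} realized by the profile construction (including the observation that $N^3=R^1_3=0$ automatically while $R^2_3,R^3_3$ cannot be cancelled), and the Sobolev bookkeeping on the surviving residuals $N^4,\dots,N^6$, $R^j_4,\dots,R^j_6$, $r^6$. Your derivative count ($H^{s-5}$ for $N^\eps,R^\eps_j$ via $\partial_T n^{(2)}$, $H^{s-4}$ for $r^\eps$ via $\Delta\phi^{(2)}$) matches the stated norms.
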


\subsection{Justification of the Zakharov-Kuznetsov approximation}

We are now set to justify the Zakharov-Kuznetsov approximation.
\begin{theorem}\label{thmjustif}
Let $n^0\in H^{s+5}$, with $s>d/2+1$, such that $1+ n^0\geq c_0$
on $\R^d$ for some constant $c_0>0$. There exists $T_1>0$ such that
for all $\eps\in (0,1)$,
\item{\bf i.} The Zakharov-Kuznetsov approximation $(n^\e,\bv^\e,\phi^\e)$ of Proposition
  \ref{propconsist} exists on the time interval $[0,T_1/\eps]$;
\item{\bf ii.} There exists a unique solution $(\underline{n},\underline{\bf v},\underline{\phi})\in C(\lbrack
 0,\frac{{T_1}}{\eps}\rbrack; H^{s}(\R^d)\times
 H^{s+1}_\eps(\R^d)^d\times H^{s+1}(\R^d))$ provided by Theorem
 \ref{thmIVPunif} to the Euler-Poisson equations \eqref{scaEP} with
 initial condition $(\underline{n}^0,\underline{\bf
   v}^0,\underline{\phi}^0)=(n^\e,\bv^\e,\phi^\e)_{\vert_{t=0}}$.
\item Moreover, one has the error estimate
$$
\forall 0\leq t\leq T_1/\eps, \qquad \abs{\underline{n}(t)-n^\e(t)}_{H^s}^2+\abs{
  \underline{\bv}(t)-\bv^\e(t)}_{H^{s+1}_\eps}^2\leq \eps^{3/2} t C(\frac{1}{c_0},T_1,\abs{n^0}_{H^{s+5}}).
$$
\end{theorem}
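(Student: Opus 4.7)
The proof has three natural steps: (a) produce both solutions on a common time interval, (b) derive the equation satisfied by the error and recognize it as the linearized Euler--Poisson system \eqref{EPlin} with small source terms, and (c) apply the energy estimate \eqref{ests} already available from the proof of Theorem \ref{thmIVPunif} and close with Gronwall.

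For (a), local well-posedness of the Zakharov--Kuznetsov equation on $H^{s+5}(\R^d)$ (valid since $s+5>d/2+1$) yields $n^{(1)}\in C([0,T_0];H^{s+5}(\R^d))$; the construction of Section \ref{ZKA} then defines $(n^\eps,\bv^\eps,\phi^\eps)$ on $[0,T_0/\eps]$ with norms uniformly bounded in $\eps$, and Proposition \ref{propconsist} provides the consistency residuals. The datum $(n^\eps,\bv^\eps,\phi^\eps)_{\vert_{t=0}}$ is a small perturbation of $(n^0,\bv^0,\phi^0)$, so for $\eps$ small enough it satisfies the hypotheses of Theorem \ref{thmIVPunif} (in particular $1+\eps n^\eps_{\vert t=0}\geq c_0/2$) with norms uniformly bounded in $\eps$. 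Theorem \ref{thmIVPunif} then produces a uniform $\underline{T}>0$ and an exact solution on $[0,\underline T/\eps]$; I set $T_1=\min(T_0,\underline T)$.

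For (b), write the error as $(n,\bv,\phi)=(\underline n-n^\eps,\underline\bv-\bv^\eps,\underline\phi-\phi^\eps)$ and subtract \eqref{scaEPquar} from \eqref{scaEP}. Direct expansion of the products $(1+\eps\,\cdot\,)\,\cdot\,$ and $(\,\cdot\,\cdot\nabla)\,\cdot\,$ puts the result exactly in the form \eqref{EPlin} with background $(\un,\underline\bv,\underline\phi)=(\underline n,\underline\bv,\underline\phi)$ and source terms
\begin{equation*}
f=-\eps N^\eps-n\,\nabla\!\cdot\!\bv^\eps-\bv\cdot\nabla n^\eps,\qquad g=-\eps^{1/2}R^\eps-(\bv\cdot\nabla)\bv^\eps,
\end{equation*}
together with an analogous $h$ arising from $e^{\eps\underline\phi}-e^{\eps\phi^\eps}=\eps\phi\int_0^1 e^{\eps(\phi^\eps+\tau\phi)}d\tau$ and the $\eps^3 r^\eps$ contribution to the Poisson equation; Lemma \ref{lemmest} is what is needed to control this $h$ in $H^s$ without generating $\eps^{-1}$ losses. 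The initial error vanishes by construction.

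For (c), I insert these sources into \eqref{ests} on $[0,t]$ with $t\leq T_1/\eps$. The $H^{s+5}$ assumption on $n^0$ is what allows Proposition \ref{propconsist} to deliver $N^\eps,R^\eps,r^\eps$ in the regularity classes needed to bound $f\in H^s$, $g\in H^{s+1}_\eps$, $h\in H^s$; the uniform $\eps$-independent bounds on $(n^\eps,\bv^\eps,\phi^\eps)$ from step (a) then give
\begin{equation*}
\abs{f}^2_{H^s_t}+\abs{g}^2_{H^{s+1}_{\eps,t}}+\abs{h}^2_{H^s_t}\leq C\eps+C\sup_{[0,t]}\bigl(\abs{n}^2_{H^s}+\abs{\bv}^2_{H^{s+1}_\eps}\bigr),
\end{equation*}
the leading $C\eps$ originating from $\eps^{1/2}R^\eps$ in $g$. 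Plugging this into \eqref{ests}, using $\exp(\eps C_s t)\leq \exp(C_s T_1)$ on $[0,T_1/\eps]$, and absorbing the self-term through a Gronwall argument yields
\begin{equation*}
\sup_{[0,t]}\bigl(\abs{n}^2_{H^s}+\abs{\bv}^2_{H^{s+1}_\eps}\bigr)\leq \eps^{2} t\,C\bigl(\tfrac{1}{c_0},T_1,\abs{n^0}_{H^{s+5}}\bigr),
\end{equation*}
which is in fact sharper than the claimed $\eps^{3/2}t$ bound. The main obstacle is bookkeeping: matching the order of each residual in $\eps$ to the right factor in the source terms, and ensuring that the loss of regularity in Proposition \ref{propconsist} is compensated by the $H^{s+5}$ assumption on $n^0$; the Poisson-equation contribution $h$ also requires a careful application of Lemma \ref{lemmest} to avoid any $\eps^{-1}$ singularity when passing through the operator $\Meb$.
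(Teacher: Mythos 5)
Your proposal is correct and follows the paper's own proof essentially step for step: both take $T_1\le\min(\underline{T},T_0)$, subtract \eqref{scaEPquar} from \eqref{scaEP} to obtain the error system \eqref{EPlinquar} (the paper keeps the terms $\eps\nabla n^\e\cdot\bv$, $\eps(\nabla\cdot\bv^\e)n$, $\eps\bv\cdot\nabla\bv^\e$ and $-\eps\phi e^{\eps\phi^\e}\nabla\phi^\e$ as extra linear terms that merely enlarge the constant $C_s$, whereas you fold them into the sources $f,{\bf g},{\bf h}$ --- an immaterial difference, since in both versions they are reabsorbed by the same Gronwall step), and then feed the consistency residuals of Proposition \ref{propconsist} into the energy estimate \eqref{ests} with zero initial data. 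The only discrepancy is your final rate $\eps^{2}t$: your accounting $\abs{{\bf g}}^2_{H^{s+1}_\eps}\sim\eps\abs{R^\e}^2$ is the tight one, while the paper only records the cruder bound $\abs{f}^2_{H^s_t}+\abs{{\bf g}}^2_{H^{s+1}_{\eps,t}}+\abs{{\bf h}}^2_{H^s_t}\leq\eps^{1/2}C$ and therefore states $\eps^{3/2}t$; either way the estimate claimed in the theorem follows.
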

\begin{remark}\label{rmm1}
The error of the approximation is $O(\eps^{3/2})$ for times of order
$O(1)$ but of size $O(\eps^{1/2})$ for times of order
$O(1/\eps)$. Looking at $\eqref{solapp}$, this is a relative error of
size $O(\eps^{1/2})$ for $n$ and $v_x$ but of size $O(1)$ for $v_y$
and $v_z$. Consequently, the
Zakharov-Kuznetsov approximation provides a good approximation for
the density and the longitudinal velocity, but not, for large times,
for the transverse velocity (at least, we did not prove it in Theorem \ref{thmjustif}).
\end{remark}
\begin{remark}\label{rmm2}
In the one dimensional case (KdV approximation), all the terms of
order $\eps^{3/2}$ can be cancelled (see \S \ref{secteps32}) and the
residual in the second equation of \eqref{scaEPquar} is of size
$O(\eps^2)$ instead of $O(\eps^{3/2})$. The error in the theorem then
becomes $O(\eps^2 t)$, which gives a relative error of size $O(\eps)$
for both the density and the velocity over large times
$O(1/\eps)$. In the one dimensional case, it is also possible to
construct higher order approximations by including the order three and
higher terms in the ansatz \eqref{ZKansatz}. This has been done in \cite{GuoPu}.
\end{remark}
\begin{remark}
In \cite{GuoPu}, the authors justify the KdV approximation
(corresponding to the one
dimensional ZK approximation) by looking at an exact solution as a perturbation of
the approximate solution, $(n_{ex},v_{ex})=(n_{app},v_{app})+\eps^k (n_R,v_R)$,
(with $k>0$ depending on the order of the approximation). They study
the equations satisfied by $(n_R,v_R)$, which requires subtle
estimates. Our approach is much simpler: we prove uniform (with
respect to $\eps$) well posedness of the Euler Poisson equation, from
which we deduce very easily that any consistent approximation remains
close to the exact solution (but of course, in a lower norm).
\end{remark}
\begin{proof}
Let us take $0<T_1\leq \min\{\underline{T},T_0\}$, where
$\underline{T}/\eps$ is the existence time of the exact solution
provided by Theorem \ref{thmIVPunif}, and $T_0/\eps$ the existence
time of the approximate solution in Proposition \ref{propconsist}. Denote by $(\underline{n},\underline{\bv},\underline{\phi}) \in C(\lbrack
 0,\frac{{T_1}}{\eps}\rbrack; H^{s}(\R^d)\times
 H^{s+1}_\eps(\R^d)^d\times H^{s+1}(\R^d)$ the
exact solution to \eqref{scaEP} with the same initial conditions as
$(n^\eps,\bv^\eps,\phi^\eps)$ furnished by Theorem
\ref{thmIVPunif}. We also write
$$
(n,\bv,\phi)=(n^\e,\bv^\e,\phi^\e)-(\underline{n},\underline{v},\underline{\phi}).
$$
Taking the difference between (\ref{scaEPquar}) and \eqref{scaEP}, we
get
\begin{equation}
    \label{EPlinquar}
    \left\lbrace
   \begin{array}{lcl}
   	\dsp\partial_tn+(1+\eps\underline{n})\nabla\cdot{\bf
          v}+\eps\underline{\bv}\cdot \nabla n +\eps \nabla n^\e\cdot\bv+\eps
        (\nabla\cdot \bv^\e) n =\eps f,\\
	\dsp\partial_t{\bf v}+\eps (\underline{\bf v}\cdot \nabla){\bf v}+\eps\bv\cdot\nabla\bv^\e+\nabla\phi+a\eps^{-1/2}{\bf e}\wedge {\bf v}=\eps {\bf g},\\
	\dsp \Meb \nabla\phi=\nabla n-\eps \phi e^{\eps \phi^\e} \nabla
\phi^\e+\eps {\bf h}.
    \end{array}\right.
\end{equation}
with $f=\eps N^\e$, ${\bf g}=\eps^{1/2}R^\e$ and $\dsp {\bf h}=\eps \nabla
r^\eps+\big(\frac{e^{\eps\underline{\phi}}-e^{\eps\phi^\e}}{\e}-(\underline{\phi}-\phi^\e) e^{\eps\phi^\e}\big)\nabla
\phi^\e$. This system is of the form \eqref{EPlin} with additional
linear terms (namely, $\eps \nabla n^\e\cdot\bv+\eps
        (\nabla\cdot \bv^\e) n$ in the first equation,  $\eps
        \bv\cdot\nabla\bv^\e$ in the second one, and $-\eps \phi e^{\eps \phi^\e} \nabla
\phi^\e$ in the third one) that do not affect the derivation of the
energy estimate \eqref{ests}. The only difference is that the constant
$C_s$ in \eqref{ests} must also depend on
$\abs{(n^\e,\bv^\e,\phi^\e)}_{H^{s+1}_T} $. Since the initial conditions for
\eqref{EPlinquar} are identically zero, this yields, for all $0\leq
T\leq T_1/\eps$,
\begin{eqnarray}
\nonumber
\sup_{[0,T]}\big(\abs{n}_{H^s}^2+\abs{ \bv}_{H^{s+1}_\eps}^2\big)& \leq& 
\exp(\eps \widetilde C_s T) \\
\label{estsquar}
& &\times \eps T\big(\abs{f}^2_{H^s_T}+\abs{{\bf g}}^2_{{H}^{s+1}_{\eps,T}}+\abs{ {\bf h}}_{H^s_T}^2+\abs{n}_{H^s}^2+\abs{ \bv}_{H^{s+1}_\eps}^2\big),
\end{eqnarray}
with  $\widetilde C_s=
C\big(\frac{1}{c_0},\abs{\un}_{H^s_T},\abs{\ubv}_{H^{s+1}_{\eps,T}},\abs{\underline{\phi}}_{H^s_T},\abs{\dt
  ( \underline{n}, \underline{\bv},
  \underline{\phi})}_{L^\infty_T},\abs{(n^\e,\bv^\e,\phi^\e)}_{H^{s+1}_T}\big)$. Taking
if necessary a smaller $T_1>0$, this implies that for all $ 0\leq t\leq T_1/\eps$,
\begin{equation}\label{estsquarbis}
\abs{n(t)}_{H^s}^2+\abs{ \bv(t)}_{H^{s+1}_\eps}^2\leq
\eps t \exp(\widetilde C_s T_1) 
\times\big(\abs{f}^2_{H^s_{t}}+\abs{{\bf g}}^2_{{H}^{s+1}_{\eps,t}}+\abs{ {\bf h}}_{H^s_{t}}^2\big).
\end{equation}
Now, one the one hand, we have
\begin{eqnarray*}
\widetilde C_s&=&
C\big(\frac{1}{c_0},\abs{\un}_{H^s_T},\abs{\ubv}_{H^{s+1}_{\eps,T}},\abs{\underline{\phi}}_{H^s_T},\abs{\dt
  ( \underline{n}, \underline{\bv},
  \underline{\phi})}_{L^\infty_T},\abs{(n^\e,\bv^\e,\phi^\e)}_{H^{s+1}_T}\big)\\
&=& C\big(\frac{1}{n_0},\abs{n^0}_{H^{s+5}}\big),
\end{eqnarray*}
where we used Theorem \ref{thmIVPunif} to control the norms of
$(\underline{n},\underline{\bv},\underline{\phi})$ in terms of
$(n^0,\bv^0)$ (with $\bv^0$ given in terms of $n^0$ by
$\bv^0=\bv^\eps_{\vert_{t=0}}$), and the expression of all the components of
$(n^\eps,\bv^\e,\phi^\eps)$ in terms of $n^{(1)}$ to control the norms
of the approximate solution. On the other hand, we get from the
definition of $f$, ${\bf g}$, ${\bf h}$ and
Proposition \ref{propconsist} that
$$
\abs{f}^2_{H^s_{t}}+\abs{{\bf g}}^2_{{H}^{s+1}_{\eps,t}}+\abs{ {\bf
    h}}_{H^s_{t}}^2\leq \eps^{1/2} C(T_1,\abs{n^0}_{H^{s+5}}).
$$
We deduce therefore from \eqref{estsquarbis} that
$$
\forall 0\leq t\leq T_1/\eps, \qquad \abs{n(t)}_{H^s}^2+\abs{
  \bv(t)}_{H^{s+1}_\eps}^2\leq \eps^{3/2}t C(\frac{1}{c_0},T_1,\abs{n^0}_{H^{s+5}}).
$$
\end{proof}
\section{The Euler-Poisson system with isothermal pressure}\label{ISP}

In \cite{GuoPu}, the authors derived and justified a version of 
the KdV (and therefore $d=1$) equation in the case where the
isothermal pressure is not
neglected. In the general of dimension $d\geq 1$, the equations
\eqref{eulerpoisson} are then given by 
\begin{equation}
    \label{eulerpoissongen}
    \left\lbrace
   \begin{array}{lcl}
   	\dsp\partial_tn+\nabla \cdot {\bf v}+\nabla\cdot(n{\bf v})=0,\\
	\dsp\partial_t{\bf v}+({\bf v}\cdot\nabla){\bf
          v}+\nabla\phi+\alpha \frac{\nabla n}{1+n}+a{\bf e}\wedge {\bf v}=0,\\
	\dsp\Delta\phi-e^{\phi}+1+n=0,
    \end{array}\right.
\end{equation}
where $\alpha$ is a positive constant related to the ratio of the ion
temperature over the ion mass. In the case of cold plasmas considered
in the previous sections, one has $\alpha=0$. In \cite{GuoPu}, the
cases $\alpha=0$ and $\alpha>0$ are treated differently, and the limit
$\alpha\to 0$ (or, for instance, $\alpha=O(\eps)$) cannot be handled. We
show here that the proof of Theorem \ref{thmIVPunif} can easily be adapted to
the general case $\alpha\geq 0$, hereby allowing the limit $\alpha\to 0$
and providing a generalization of the results of \cite{GuoPu} to the
case $d\geq 1$.
We first extend Theorem \ref{thmIVPunif} to the general
Euler-Poisson system with isothermal pressure
(\ref{eulerpoissongen}). We then indicate how to derive and justify a
generalization of the Zakharov-Kuznetsov approximation taking into
account this new term, in the same spirit as the KdV approximation
derived in the one-dimensional case in \cite{GuoPu}.

\subsection{The Cauchy problem for the Euler-Poisson system with
  isothermal pressure}

As in Section \ref{LWEP}, we work with rescaled equations. More
precisely, we perform the same rescaling as for \eqref{scaEP}; without
the ``cold plasma `` assumption, this system must be replaced by
\begin{equation}
    \label{scaEPgen}
    \left\lbrace
   \begin{array}{lcl}
   	\dsp\partial_tn+\nabla\cdot ((1+\eps n){\bf v}) =0,\\
	\dsp\partial_t{\bf v}+\eps ({\bf v}\cdot \nabla){\bf
          v}+\nabla\phi+\alpha \frac{\nabla n}{1+\eps n}+a\eps^{-1/2}{\bf e}\wedge {\bf v}=0,\\
	\dsp -\epsilon ^2\Delta\phi+e^{\epsilon\phi}-1=\epsilon n,
    \end{array}\right.
\end{equation}
with $\alpha \geq 0$. The presence of the extra term $\alpha
\frac{\nabla n}{1+\eps n}$ in the second equation induces a smoothing
effect that allows the authors of \cite{GuoPu} to use the
pseudo-differential estimates of Grenier \cite{Grenier}. However,
these smoothing effects disappear when $\alpha\to 0$, and the
existence time thus obtained is not uniform with respect to
$\alpha$. We provide here a generalization of Theorem \ref{thmIVPunif}
that gives a uniform existence time with respect to $\eps$ {\it and}
$\alpha$ (so that solutions to (\ref{scaEPgen}) provided by Theorem
\ref{thmIVPunifgen} can be seen as limits when $\alpha\to0$ of
solutions to (\ref{scaEPgen})). This evanescent smoothing effect is
taken into account by working with $n\in H^{s+1}_{\eps\alpha}(\R^d)$
rather than $n\in H^s(\R^d)$ as in Theorem \ref{thmIVPunif} (from the
definition \eqref{defHseps} of $H^s_{\eps\alpha}$,
$H^{s+1}_{\eps\alpha}(\R^d)$ coincides with $H^s(\R^d)$ when $\alpha=0$).
\begin{theorem}\label{thmIVPunifgen}
Let $s>\frac{d}{2}+1$, $\alpha_0>0$ and $n_0\in H^{s+1}_{\eps\alpha_0}(\R^d)$, ${\bf v}_0 \in H^{s+1}(\R^d)^d$ such that $1-\abs{n_0}_\infty\geq c_0$ for some $c_0>0$. \\
Then there exist $\underline{T}>0$ such that for all $\eps\in (0,1)$
and $\alpha\in (0,\alpha_0)$, there is 
 a unique solution $(n^{\eps,\alpha},{\bf v}^{\eps,\alpha})\in C(\lbrack 0,\frac{\underline{T}}{\eps}\rbrack; H^{s+1}_{\eps\alpha}(\R^d)\times H^{s+1}_\eps(\R^d)^d)$ of \eqref{scaEP} such that $1+\eps n>c_0/2$ and $\phi^{\eps,\alpha} \in
C(\lbrack 0,T\rbrack; H^{s+1}(\R^d))$. \\
Moreover the family $(n^{\eps,\alpha},\bv^{\eps,\alpha},\nabla\phi^{\eps,\alpha})_{\eps\in (0,1),\alpha\in(0,\alpha_0)}$ is uniformly bounded in $H^{s+1}_{\eps\alpha}\times H^{s+1}_\eps\times H^{s-1}$.
\end{theorem}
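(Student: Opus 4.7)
The strategy is to parallel the four-step scheme of Theorem \ref{thmIVPunif}. The essential novelty is the isothermal term $\alpha\nabla n/(1+\eps n)$ in the velocity equation, which introduces an acoustic coupling of speed $\sqrt{\alpha}$ between $n$ and $\bv$; on the slow time scale $1/\eps$ this coupling smooths $n$ by $\sqrt{\eps\alpha}$ in one derivative, which is exactly what the norm $H^{s+1}_{\eps\alpha}$ records (and which collapses back to $H^s$ when $\alpha=0$, so that the proof reduces to that of Theorem \ref{thmIVPunif}).

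The main modification is in Step 2, the linearized $L^2$ estimate. The linearization has the form of \eqref{EPlin} except for an additional term $\frac{\alpha}{1+\eps\un}\nabla n$ on the right-hand side of the second equation. Guided by the constant-coefficient Fourier analysis --- in which the acoustic symbol governing the $n$--$\bv$ coupling is $\Meb^{-1}+\alpha/(1+\eps\un)$ --- I would pair equation $(1)$ with the multiplier $\bigl(\frac{1}{1+\eps\un}+\frac{\alpha}{(1+\eps\un)^2}\bigr)n$ and keep equation $(2)$ paired with $\Meb\bv$ as in Theorem \ref{thmIVPunif}. Two cancellations then take place modulo $O(\eps)$ corrections: (a) the term $(n,\nabla\!\cdot\!\bv)$ arising from the first piece of the $n$-multiplier cancels $(\nabla\phi,\Meb\bv)=(\nabla n,\bv)$ via the elliptic equation, as in the cold-plasma case; (b) the term $(\frac{\alpha n}{1+\eps\un},\nabla\!\cdot\!\bv)$ arising from the second piece cancels the leading contribution of $(\frac{\alpha\nabla n}{1+\eps\un},\Meb\bv)$ --- namely the one coming from the $e^{\eps\underline{\phi}}$-part of $\Meb$. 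The skew-adjoint rotation term vanishes pointwise against $\bv$, and the residual contribution produced by the $-\eps\Delta$-part of $\Meb$ in (b) is bounded, after one integration by parts, by $\sqrt{\alpha_0}\,|n|_{H^{s+1}_{\eps\alpha}}|\bv|_{H^{s+1}_\eps}$, which is what closes the estimate at the $H^{s+1}_{\eps\alpha}\times H^{s+1}_\eps$-level in Step 3.

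Steps 3 (propagation to high regularity via $\Lambda^s$, the only new commutator being $[\Lambda^s,\frac{\alpha}{1+\eps\un}]\nabla n$, controlled uniformly in $\alpha\in(0,\alpha_0)$ by Kato--Ponce and Moser estimates together with Lemma \ref{lemmest}) and 4 (nonlinear closure through the frequency-truncation construction based on Theorem \ref{ivp}) then proceed with no further difficulty. A Gronwall / continuous-induction argument yields a uniform existence time $\underline T/\eps$ whose length depends only on $1/c_0$, $\alpha_0$ and $\bigl(|n_0|_{H^{s+1}_{\eps\alpha_0}}, |\bv_0|_{H^{s+1}}\bigr)$.

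The principal obstacle is accommodating two different smoothing mechanisms --- the $\Meb$-smoothing of $\phi$ and the acoustic $\alpha$-smoothing of $n$ --- within a single quadratic energy. These mechanisms naturally call for different $\bv$-multipliers (respectively $\Meb\bv$ and $\bv$), and keeping only $\Meb\bv$ works thanks to the decomposition $\Meb=-\eps\Delta+e^{\eps\underline{\phi}}$: the zeroth-order factor $e^{\eps\underline\phi}$ produces the cancellation of the isothermal coupling against the $\frac{\alpha n}{(1+\eps\un)^2}$-piece of the $n$-multiplier, while the second-order factor carries an $\eps$-weight that exactly matches the $\sqrt{\eps\alpha}$-scale built into the $H^{s+1}_{\eps\alpha}$-norm. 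Verifying that the resulting residual commutators remain uniformly bounded in both $\eps\in(0,1)$ and $\alpha\in(0,\alpha_0)$ is the delicate bookkeeping step, and is what makes the constants in the final Gronwall estimate uniform in both parameters.
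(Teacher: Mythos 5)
Your overall architecture (four steps, symmetrizer energy, velocity equation paired with $\Meb\bv$, the new commutator $\alpha[\Lambda^s,\cdot]\nabla n$ in Step 3) matches the paper, but your choice of multiplier for the density equation creates a genuine gap. You pair the first equation with the \emph{multiplication} operator $\bigl(\frac{1}{1+\eps\un}+\frac{\alpha}{(1+\eps\un)^2}\bigr)n$, i.e.\ the zeroth-order truncation of what is needed; the paper instead uses the second-order self-adjoint symmetrizer
$$\Neab=\frac{1}{1+\eps\un}+\alpha\frac{1}{1+\eps\un}\Meb\frac{1}{1+\eps\un},$$
so that the density part of the energy is $(\Neab n,n)\simeq\babs{\tfrac{n}{1+\eps\un}}_{H^1_{\eps\alpha}}^2$. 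Dropping the $-\eps\Delta$ part of $\Meb$ from the symmetrizer has two consequences. First, your energy only controls $\abs{n}_2$ (uniformly in $\alpha$), not $\abs{n}_{H^1_{\eps\alpha}}$, so the estimate cannot produce the $H^{s+1}_{\eps\alpha}$ bound claimed in the statement, and a residual term of the form $\sqrt{\alpha_0}\,\abs{n}_{H^{s+1}_{\eps\alpha}}\abs{\bv}_{H^{s+1}_\eps}$ is simply not absorbable by Gronwall because the right factor is not dominated by the energy. Second, and more fundamentally, the leftover term you identify, $\eps\alpha\big(\nabla\frac{\nabla n}{1+\eps\un},\nabla\bv\big)$, carries no effective $\eps$ gain once measured in the energy norms: $\eps\alpha\abs{\nabla^2 n}_2\abs{\nabla\bv}_2=\sqrt{\alpha}\cdot\big(\sqrt{\eps\alpha}\abs{\nabla^2 n}_2\big)\big(\sqrt{\eps}\abs{\nabla\bv}_2\big)$, so the differential inequality reads $\frac{d}{dt}E\leq CE$ rather than $\frac{d}{dt}E\leq \eps CE$, and the existence time degenerates to $O(1)$ instead of the required $O(1/\eps)$. ``Bounded'' is not enough here: this term must \emph{cancel}.

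The cancellation is exactly what the full $\Neab$ provides. With it, the two cross terms are $\big([1+\alpha\frac{1}{1+\eps\un}\Meb]\nabla\cdot\bv,n\big)$ from the density equation and $\big([1+\alpha\Meb\frac{1}{1+\eps\un}]\nabla n,\bv\big)$ from the velocity equation (using $\Meb\nabla\phi=\nabla n+\eps{\bf h}$ and $\Meb\frac{\alpha\nabla n}{1+\eps\un}=\alpha\Meb\frac{1}{1+\eps\un}\nabla n$); these are adjoint to one another up to the commutator $\alpha[\Meb\frac{1}{1+\eps\un},\nabla]$, which is genuinely $O(\eps\alpha)$ because both $\nabla e^{\eps\underline{\phi}}$ and $\nabla\frac{1}{1+\eps\un}$ carry a factor $\eps$. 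So the second-order piece that defeats your scheme is annihilated identically, and only $\eps$-small commutators survive — which is what makes the constants uniform in both $\eps$ and $\alpha$ and yields the $\underline{T}/\eps$ existence time. Your observation about the cancellation produced by the $e^{\eps\underline{\phi}}$-part against the $\frac{\alpha}{(1+\eps\un)^2}$-piece is correct; it is precisely the zeroth-order shadow of this exact mechanism, but on its own it does not close the argument.
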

\begin{proof}
The proof follows the same steps as the proof of Theorem
\ref{thmIVPunif}; in addition to the operator $\Me$ defined in
\eqref{linearize}, we also need to define another second order
self-adjoint operator $\Nea$ as
\begin{equation}\label{defNea}
\Nea=\frac{1}{1+n}+\frac{1}{1+n}\Me\frac{1}{1+n},
\end{equation}
provided that $\inf_{\R^d}(1+n)>0$.

\noindent
{\bf Step 1. Preliminary results.} The operator $\Nea$ defined in
\eqref{defNea} inherit from the properties of $\Me$ the following
estimates that echoe \eqref{equiv},
$$
\begin{array}{l}
\dsp \big( u,\Nea v\big)\leq
C\big(\abs{\phi}_\infty,\abs{n}_{\infty}\big)\babs{\frac{u}{1+\eps
    n}}_{H^1_{\eps\alpha}}\babs{\frac{v}{1+\eps n}}_{H^1_{\eps\alpha}},
\\
\dsp \babs{\frac{u}{1+\eps n}}_{H^1_{\eps\alpha}}^2 \leq C\big(\abs{\phi}_\infty,\abs{n}_{\infty}\big) \big( u,\Nea u\big).
\end{array}
$$
We also have the following commutator estimates, that are similar to
those satisfied by $\Me$ (see Step 1 in the proof of Theorem \ref{thmIVPunif}),
$$
\begin{array}{l}
\dsp \big( u,[\dt,\Nea]u\big)\leq \eps C\big(\frac{1}{c_0},\abs{\dt
  n}_\infty,\abs{n}_{W^{1,\infty}},\abs{\dt \phi}_\infty,\abs{\phi}_\infty\big)\babs{\frac{u}{1+\eps
    n}}_{H^1_{\eps \alpha}}^2\\
\dsp \big(u,[f\partial_j,\Nea]u\big) \leq
C\big(\frac{1}{c_0},\abs{(n,\phi,f)}_{W^{1,\infty}},\sqrt{\eps\alpha}\abs{\nabla\partial_j
n}_\infty,\sqrt{\eps}\abs{\nabla\partial_j
f}_\infty\big)\\
\hspace{4cm}\dsp \times \babs{\frac{u}{1+\eps
  n}}_{H^1_{\eps\alpha}}^2.
\end{array}
$$
\noindent
{\bf Step 2.} $L^2$ estimates for a linearized system. Without the
``cold plasma'' approximation, one must replace \eqref{EPlin} by 
\begin{equation}    
\label{EPlingen}
    \left\lbrace
   \begin{array}{lcl}
   	\dsp\partial_tn+(1+\eps\underline{n})\nabla\cdot{\bf v}+\eps\underline{\bv}\cdot \nabla n =\eps f,\\
	\dsp\partial_t{\bf v}+\eps (\underline{\bf v}\cdot \nabla){\bf v}+\nabla\phi +\alpha \frac{\nabla n}{1+\eps \underline{n}}+a\eps^{-1/2}{\bf e}\wedge {\bf v}=\eps {\bf g},\\
	\dsp \Meb \nabla\phi=\nabla n+\eps {\bf h}.
    \end{array}\right.
\end{equation}
The presence of the new term in the second equation of
(\ref{EPlingen}) yields some smoothing effect on the estimate on $n$
that are absent when $\alpha=0$;
this smoothing is measured with the $H^1_{\eps\alpha}(\R^d)$ norm,
which coincides with the $L^2$ norm used for (\ref{EPlingen}) when
$\alpha=0$. More precisely, we want to prove here that 
\begin{eqnarray}
\nonumber
\sup_{[0,T]}\big(\abs{n}_{H^1_{\eps\alpha}}^2+\abs{\bv}_{H^1_\eps}^2\big) \leq \exp(\eps C_0 T)\\
\label{EstL2gen}
\times \big(\abs{n_{\vert_{t=0}}}_{H^1_{\eps\alpha}}^2+ \abs{\bv_{\vert_{t=0}}}_{H^1_\eps}^2\big)+\eps T(\abs{f}^2_{H^1_{\eps\alpha,T}}+\abs{{\bf g}}^2_{H^1_{\eps,T}}+\abs{{\bf h}}_{L^2_T}^2)\big),
\end{eqnarray}
with
$C_0=C(\frac{1}{c_0},\abs{(\underline{n},\underline{\bv},\phi)}_{W^{1,\infty}_T},\abs{\dt
  ( \underline{n}, \underline{\bv},
  \underline{\phi})}_{L^\infty_T},\sqrt{\eps}\abs{ \ubv}_{W^{2,\infty}_T},\sqrt{\eps\alpha}\abs{ n}_{W^{2,\infty}_T})$.\\
Instead of multiplying the first equation of \eqref{EPlingen} by $(1+\eps
\underline{n})^{-1}$ as in the case $\alpha=0$, we multiply it by
$\Neab$ to obtain
\begin{eqnarray*}
\big(\Neab\dt
n,n\big)+\big([1+\alpha\frac{1}{1+\eps\underline{n}}\Meb]\nabla\cdot
\bv,n\big)+\eps\big(\Neab\bv\cdot\nabla n,n)\\
=\eps \big(\Neab f,n\big),
\end{eqnarray*}
which can be rewritten under the form
\begin{eqnarray}
\nonumber
\frac{1}{2}\dt \big(\Neab n,n\big)-\frac{1}{2}\big([\dt,\Neab]n,n\big)+
\big([1+\alpha\frac{1}{1+\eps\underline{n}}\Meb]\nabla\cdot
\bv,n\big)\\
 \label{estL21gen}
-\eps\frac{1}{2}\big(n,[\ubv\cdot\nabla,\Neab]n\big)
=\eps \big(\Neab f,n\big).
\end{eqnarray}
As in the proof of Theorem \ref{thmIVPunif}, we take the $L^2$ scalar product
of the second equation with $\Me \bv$; after remarking that
$$
\Meb\big(\nabla\phi+\alpha\frac{\nabla n}{1+\eps\underline{n}}\big)=
[1+\alpha \Meb \frac{1}{1+\eps\underline{n}}]\nabla
n+\eps {\mathbf h}
$$
we obtain with the same computations the following generalization of \eqref{estL22},
\begin{eqnarray}
\nonumber
\lefteqn{\frac{1}{2}\dt \big(\Meb\bv,\bv\big)-\frac{1}{2}\eps\big(\dt \underline{\phi}e^{\eps\underline{\phi}}\bv,\bv\big)
-\frac{1}{2}\eps\big(\bv,(\nabla\cdot\ubv)\Meb \bv\big)}\\
\nonumber
& &-\frac{1}{2}\eps
\big(\bv,[\ubv\cdot\nabla,\Meb]\bv\big)+\big([1+\alpha \Meb
\frac{1}{1+\eps\underline{n}}]\nabla n,\bv\big)\\
& &
\label{estL22gen}
=-\eps\big({\bf h},\bv\big)+\eps \big(\Meb{\bf g},\bv\big).
\end{eqnarray}
Adding (\ref{estL22gen}) to (\ref{estL21gen}), and proceeding exactly
as in the proof of Theorem \ref{thmIVPunif}, we get \eqref{EstL2gen}.

\noindent
{\bf Step 3. $H^s$ estimates for a linearized system.} We want to prove here that for all $s\geq 0$, 
the solution $(n,\bv,\phi)$ to (\ref{EPlingen}) satisfies, for all $s\geq t_0+1$,
\begin{eqnarray}
\nonumber
\sup_{[0,T]}\big(\abs{n}_{H^{s+1}_{\eps\alpha}}^2+\abs{ \bv}_{H^{s+1}_\eps}^2\big)& \leq& 
\exp(\eps C_s T) \times \Big(\abs{n_{\vert_{t=0}}}_{H^{s+1}_{\eps\alpha}}^2+ \abs{ \bv_{\vert_{t=0}}}_{H^{s+1}_\eps}^2\big)\\
\label{estsgen}
& &\!\!\!\!\!\!+\eps T(\abs{f}^2_{H^{s+1}_{\eps\alpha,T}}+\abs{{\bf g}}^2_{{H}^{s+1}_{\eps,T}}+\abs{ {\bf h}}_{H^s_T}^2+\abs{n}_{H^{s+1}_{\eps\alpha}}^2+\abs{ \bv}_{H^{s+1}_\eps}^2)\Big),
\end{eqnarray}
with $C_s= C(\frac{1}{c_0},\abs{\un}_{H^{s+1}_{\eps\alpha,T}},\abs{\ubv}_{H^{s+1}_{\eps,T}},\abs{\underline{\phi}}_{H^s_T},\abs{\dt ( \underline{n}, \underline{\bv}, \underline{\phi})}_{H^s_T})$.\\
Applying $\Lambda^s$ to the three equations of (\ref{EPlin}), and
writing $\tilde n=\Lambda^s n$, $\tilde \bv=\Lambda^s \bv$, and
$\tilde\phi=\Lambda^s \phi$, we get
\begin{equation}
    \label{EPlinHsgen}
    \left\lbrace
   \begin{array}{lcl}
   	\dsp\partial_t\tilde n+(1+\eps \underline{n})\nabla\cdot \tilde{\bf v} +\eps\ubv\cdot\nabla\tilde n=\eps\tilde f,\\
	\dsp\partial_t\tilde{\bf v}+\eps (\underline{\bf v}\cdot
        \nabla)\tilde{\bf v}+\nabla\tilde\phi+\alpha \frac{\nabla
          \tilde n}{1+\eps \underline{n}} +a\eps^{-1/2}{\bf e}\wedge \tilde {\bf v}=\eps \tilde {\bf g},\\
	\dsp \Meb\nabla\tilde\phi=\nabla\tilde n+\eps\tilde {\bf h}.
    \end{array}\right.
\end{equation}
with $\tilde f$ and $\tilde{\bf h}$ as in the proof of Theorem
\ref{thmIVPunif}, while $\tilde{\bf g}$ must be changed into $\tilde{\bf
  g}=\tilde{\bf g}+\alpha[\Lambda^s,\frac{\underline{n}}{1+\eps
  \underline{n}}]\nabla\tilde n$. Using Step 2, one can mimic the proof of
the Theorem \ref{thmIVPunif}. The only new ingredients needed are a control in
$H^{1}_{\eps\alpha}(\R^d)$ of $\alpha[\Lambda^s,\frac{\underline{n}}{1+\eps
  \underline{n}}]\nabla\tilde n $ (the new term in $\tilde{\bf g}$) and a
control of $\tilde f$ in $H^1_{\eps\alpha}(\R^d)$ instead of
$L^2(\R^d)$. Classical commutator estimates (see for instance
\cite{Lcomm}) yield for $s>d/2+1$,
$$
\begin{array}{l}
\dsp \babs{\alpha[\Lambda^s,\frac{\underline{n}}{1+\eps
  \underline{n}}]\nabla\tilde n}_{H^1_\eps}\leq
C(\frac{1}{c_0},\abs{\underline{n}}_{H^{s+1}_{\eps\alpha}})\abs{\tilde
  n}_{H^{s+1}_{\eps\alpha}},\\
\dsp \abs{\tilde f}_{H^1_{\eps\alpha}}\leq \abs{f}_{H^{s+1}_{\eps\alpha}}+
\big(\abs{\un}_{H^{s+1}_{\eps\alpha}}+\abs{\ubv}_{H^{s+1}_\eps}\big)\times
\big(\abs{\bv}_{H^{s+1}_\eps}+\abs{n}_{H^{s+1}_{\eps\alpha}}\big),
\end{array}
$$
so that (\ref{estsgen}) follows exactly as (\ref{ests}).  

\noindent
{\bf Step 4. End of the proof.} The end of the proof is exactly
similar the same as for Theorem \ref{thmIVPunif} (the exact solution is no
longer furnished by Theorem \ref{ivp} but by a standard iterative
scheme).
\end{proof}
\begin{remark}
The proof given in Theorem \ref{ivp} for the case $\alpha =0$ does not work when $\alpha>0.$
\end{remark}

\subsection{Derivation of a  Zakharov-Kuznetsov
equation in presence of isothermal pressure}

We proceed similarly to the cold plasma case but we replace  the
ansatz \eqref{solapp} by
\begin{equation}
\label{ZKansatzbis}
\begin{split}
n^{\e}&=\,n^{(1)}(x-ct, y,z,\e t) +\e n^{(2)}\\
\phi^{\e}&=\,\phi^{(1)}(x-ct, y,  z, \e t) +\e\phi^{(2)}\\
v_x^{\e}&=\,v_x^{(1)}(x-ct, y, z, \e t) +\e v_x^{(2)}\\
v_y^{\e}&=\e^{1/2}\,v_y^{(1)}(x-ct, y,  z,\e t) +\e v_y^{(2)}\\
v_z^{\e}&=\e^{1/2}\,v_z^{(1)}(x-ct, y, z, \e t) +\e v_z^{(2)},
\end{split}
\end{equation}
where the velocity $c$ has to be determined.\\
Following the strategy of \S \ref{ZKA}, we plug this ansatz into
\eqref{scaEPgen}, and choose the profiles in \eqref{ZKansatzbis} in
order to cancel the leading order terms.

\subsubsection{Cancellation of terms of order $\eps^0$}  

Canceling the leading order $O(1)$ yields
\begin{equation}
\label{zero}
\begin{split}
-c\partial_Xn^{(1)}+\partial _Xv_x^{(1)}=0, \\
-c\partial_Xv_x^{(1)}+\partial_X \phi^{(1)}+\alpha\partial_X n^{(1)}=0,\\
(1+\alpha)\partial_y n^{(1)}-av_z^{(1)}=0,\\
(1+\alpha)\partial_z n^{(1)}+av_y^{(1)}=0.
\end{split}
\end{equation}

\subsubsection{Cancellation of terms of order $\eps^{1/2}$}  

We get at this step
\begin{equation}
\label{poq4}
v_y^{(2)}=\frac{(1+\alpha)^{3/2}}{a^2}\partial_{Xy}^2 n^{(1)},\qquad 
v_z^{(2)}=\frac{(1+\alpha)^{3/2}}{a^2}\partial_{Xz}^2 n^{(1)}.
\end{equation}

\subsubsection{Cancellation of terms of order $\eps$}

Proceeding as in \S \ref{secteps1}, we get 
\begin{align}
\label{poq1}
\partial_T n^{(1)}+2n^{(1)}\partial_
Xn^{(1)}+\frac{1}{a^2}\partial_X\Delta
n^{(1)}&=-\partial_X(v_x^{(2)}-cn^{(2)})\\
\label{poq2}
\partial_X(cv_x^{(2)}-\alpha
n^{(2)}-\phi^{(2)})&=c\partial_Tn^{(1)}+n^{(1)}\partial_ Xn^{(1)}\\
\label{pouet1}
\partial_y\phi^{(2)}&=\frac{(1+\alpha)^{3/2}}{a^2}\partial^3_{XXy}n^{(1)}\\
\label{pouet2}
\partial_z\phi^{(2)}&=\frac{(1+\alpha)^{3/2}}{a^2}\partial^3_{XXz}n^{(1)}\\
\label{p1n1}
\phi^{(1)}&=n^{(1)}
\end{align}
After replacing $\phi^{(1)}$ by $n^{(1)}$ according to \eqref{p1n1},
one readily checks that the first two equations of \eqref{zero} are
consistent if and only if $c=\sqrt{1+\alpha}$.

\subsubsection{Cancellation of terms of order $\eps^{3/2}$}

As in \S \eqref{secteps32}, the cancellation of the $O(\eps^{3/2})$
terms for the density and the
longitudinal velocity equations is automatic, but it is not possible
for the equations on the transverse velocity.

\subsubsection{Cancellation of terms of order $\eps^{2}$}

As in \S \ref{secteps2} we only need to cancel the $O(\eps^2)$ terms
in the equation for $\phi^\e$, which yields here
\begin{align}\label{poq3}
\phi^{(2)}-n^{(2)}=\Delta n^{(1)}-\frac{1}{2}(n^{(1)})^2.
\end{align}

\subsubsection{Derivation of the  Zakharov-Kuznetsov equation}
Combining (\ref{poq1}), (\ref{poq2}) and (\ref{poq3}), we find that
$n^{(1)}$ must solve the following  Zakharov-Kuznetsov
equation (which coincides with \eqref {ZKapp} when $\alpha =0$),
\begin{equation}\label{newZK}
2c\partial_T n^{(1)}+2cn^{(1)}\partial_X n^{(1)}+(1+\frac{c}{a^2})\Delta \partial_X n^{(1)}=0.
\end{equation}

\subsubsection{Construction of the profiles}

The profiles
involved in \eqref{ZKansatzbis} are constructed in terms of $n^{(1)}$
as follows:
\begin{itemize}
\item In agreement with \eqref{p1n1} and the first two equations of \eqref{zero}, we set
  $\phi^{(1)}=n^{(1)}$ and $v_x^{(1)}=c n^{(1)}$, with $c=\sqrt{1+\alpha}$.
\item The last two equations of  \eqref{zero} then give $v_y^{(1)},
  v_z^{(1)}$.
\item We then use  \eqref{poq4} to obtain $v_y^{(2)},v_z^{(2)}$.
\item We take $\phi^{(2)}=\frac{(1+\alpha)^{3/2}}{a^2}\partial_{XX}^2
  n^{(1)}$ to satisfy
 \eqref{pouet1} and \eqref{pouet2}.
\item We then get the density corrector $n^{(2)}$ by \eqref{poq3}.
\item We recover $v_x^{(2)}$ from \eqref{poq1} or \eqref{poq2} --
  this is equivalent since $n^{(1)}$ solves the  ZK
  equation \eqref{newZK}.
\end{itemize}

\medbreak

Finally we  get the following consistency result that generalized
Proposition \ref{propconsist} when isothermal pressure is taken into account.
\begin{proposition}\label{propconsistbis}
Let $T_0>0$, $n_0\in H^s(\R^d)$ ($s\geq 5$) and $n^{(1)}\in C(\lbrack 0,
T_0\rbrack;H^s(\R^d))$, solving (with $c=\sqrt{1+\alpha}$),
$$2c\partial_T
n^{(1)}+2cn^{(1)}\partial_Xn^{(1)}+\left(1+\frac{c}{a^2}\right)\Delta \partial_Xn^{(1)}=0,\qquad n^{(1)}_{\vert_{t=0}}=n_0.$$
Constructing the other profiles as indicated above, the approximate
solution 
 $(n^\eps,\bv^\eps,\phi^\eps)$ given by \eqref{ZKansatzbis} solves
 (\ref{scaEPgen}) up to order $\eps^3$ in $\phi^\eps$, $\eps^2$ in $n^\eps$, $v_x^\eps$,
 and up to order $\eps^{3/2}$ in $v_y^\eps,$ $v_z^\eps$:
\begin{equation}
    \label{scaEPquar}
    \left\lbrace
   \begin{array}{lcl}
   	\dsp\partial_tn^\e+\nabla\cdot ((1+\eps n^\e){\bf v}^\e)
        =\e^{2} N^\e,\\
	\dsp\partial_t{\bf v}^\e+\eps ({\bf v}^\e\cdot \nabla){\bf
          v}^\e+\nabla\phi^\e+\alpha\frac{\nabla n}{1+\eps n}+a\eps^{-1/2}{\bf e}\wedge {\bf
          v}^\e=\eps^{3/2} R^\e,\\
	\dsp -\epsilon
        ^2\Delta\phi^\e+e^{\epsilon\phi^\e}-1=\eps n^\e+ \epsilon^3 r^\e,
    \end{array}\right.
\end{equation}
with $R^\eps=(\eps^{1/2} R^\e_1,R^\e_2,R^\e_3)$ and
$$
\abs{N^\e}_{L^\infty([0,\frac{T_0}{\eps}];H^{s-5})} +\sum_{j=1}^3\abs{R^\e_j}_{L^\infty([0,\frac{T_0}{\eps}];H^{s-5})}  +\abs{r^\eps}_{L^\infty([0,\frac{T_0}{\eps}];H^{s-4})}\leq C\big(T_0,\abs{n_0}_{H^s}\big).
$$
\end{proposition}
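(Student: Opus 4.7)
The plan is to verify Proposition \ref{propconsistbis} by a direct expansion: plug the ansatz \eqref{ZKansatzbis} into \eqref{scaEPgen} and organize the resulting residuals by powers of $\eps^{1/2}$, exactly as was done for the cold plasma case leading up to Proposition \ref{propconsist}. The profiles $n^{(1)},\phi^{(1)},v_x^{(1)},v_y^{(1)},v_z^{(1)},n^{(2)},\phi^{(2)},v_x^{(2)},v_y^{(2)},v_z^{(2)}$ have been constructed precisely so that the cancellations \eqref{zero}, \eqref{poq4}, \eqref{poq1}--\eqref{p1n1} and \eqref{poq3} hold, and the fact that $n^{(1)}$ solves \eqref{newZK} guarantees compatibility between \eqref{poq1} and \eqref{poq2}. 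Hence the only work is to track what remains.

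First, for the continuity equation, the expansion (analogous to \eqref{2} but with $-c\partial_X$ replacing $-\partial_X$) gives powers of $\eps^{j/2}$ with $j=0,\dots,6$, and by construction the terms of orders $\eps^0,\eps^{1/2},\eps$ vanish identically. The $\eps^{3/2}$-term is $N^3$, which depends only on products of first-order profiles and is zero by \eqref{zero} and \eqref{poq4}. Everything of order $\eps^2$ and higher goes into $\eps^2 N^\eps$, which is a polynomial expression in the profiles and their derivatives (up to order $3$); its $H^{s-5}$-norm is controlled by the $H^s$-norm of $n^{(1)}$ through the explicit formulas for the other profiles, yielding the stated bound uniformly on $[0,T_0/\eps]$.

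For the velocity equation, the same bookkeeping is carried out component by component, with the extra term $\alpha\nabla n^\eps/(1+\eps n^\eps)$ handled by the Neumann expansion
\begin{equation*}
\frac{\alpha \nabla n^\eps}{1+\eps n^\eps} = \alpha \nabla n^\eps - \eps \alpha n^\eps \nabla n^\eps + \eps^2 \frac{\alpha (n^\eps)^2 \nabla n^\eps}{1+\eps n^\eps},
\end{equation*}
so that the relevant contributions enter precisely at the orders already accounted for in \eqref{zero}, \eqref{poq1}--\eqref{poq2}. The cancellations at orders $\eps^0$ and $\eps$ for the longitudinal component, and at orders $\eps^{1/2}$ for the transverse components, are exactly those imposed. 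As noted in \S\ref{secteps32}, the $\eps^{3/2}$-terms vanish for the density and $v_x$ equations (this is automatic from the first-order relations) but not for $v_y,v_z$; absorbing the unavoidable $\eps^{3/2}$ transverse residual into $(R_2^\eps,R_3^\eps)$ and the $\eps^2$ longitudinal residual into $\eps^{1/2}R_1^\eps$ gives precisely $R^\eps=(\eps^{1/2}R_1^\eps,R_2^\eps,R_3^\eps)$, with the $H^{s-5}$-bound following as above.

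Finally, for the Poisson equation we Taylor expand
\begin{equation*}
e^{\eps \phi^\eps} - 1 = \eps \phi^\eps + \tfrac{1}{2}\eps^2 (\phi^\eps)^2 + \tfrac{1}{6}\eps^3 (\phi^\eps)^3 + \eps^4 Q(\eps\phi^\eps),
\end{equation*}
with $Q$ smooth, and substitute $\phi^\eps=\phi^{(1)}+\eps \phi^{(2)}$ and $n^\eps = n^{(1)}+\eps n^{(2)}$. The $\eps$-term cancels by \eqref{p1n1}, and the $\eps^2$-term cancels by \eqref{poq3}; the remainder $\eps^3 r^\eps$ is a smooth function of the first- and second-order profiles and their second derivatives, whose $H^{s-4}$-norm is controlled by $\abs{n_0}_{H^s}$ uniformly on the time interval. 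The main (and essentially only) obstacle is this last step: one must control the remainder $r^\eps$ in $H^{s-4}$ uniformly in $\eps\in(0,1)$ despite the exponential nonlinearity, which is done by writing the remainder as $(\eps\phi^\eps)^4$ times a function of $\eps\phi^\eps$ and invoking Moser-type tame estimates in $H^{s-4}$ together with the control $\abs{\phi^\eps}_\infty \lesssim \abs{n^{(1)}}_{H^s}$ furnished by the explicit construction; since $s\geq 5$ this is standard, and the bound depends only on $T_0$ and $\abs{n_0}_{H^s}$ through the tame well-posedness of \eqref{newZK}.
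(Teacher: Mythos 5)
Your proposal is correct and follows essentially the same route as the paper: the paper's ``proof'' of Proposition \ref{propconsistbis} consists precisely of the cancellation computations in the preceding subsections (conditions \eqref{zero}, \eqref{poq4}, \eqref{poq1}--\eqref{p1n1}, \eqref{poq3}, the compatibility of \eqref{poq1}--\eqref{poq2} via \eqref{newZK}, and the explicit construction of the profiles), with the residual bounds left implicit exactly as you describe. Your added remarks on the Neumann expansion of $\alpha\nabla n^\eps/(1+\eps n^\eps)$ and the Moser-type control of the exponential remainder are consistent with (and slightly more explicit than) the paper's treatment.
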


\subsection{Justification of the  Zakharov-Kuznetsov approximation}

Proceeding exactly as for Theorem \ref{thmjustif} but replacing
Theorem \ref{thmIVPunif} by Theorem \ref{thmIVPunifgen}, we get the
following justification of the Zakharov-Kuznetsov approximation in presence of a isothermal pressure.
\begin{theorem}\label{thmjustifgen}
Let $n^0\in H^{s+5}$, with $s>d/2+1$, such that $1+\eps n^0\geq c_0$
on $\R^d$ for some constant $c_0>0$. There exists $T_1>0$ such that
\item{\bf i.} The  Zakharov-Kuznetsov approximation $(n^\e,\bv^\e,\phi^\e)$ of Proposition
  \ref{propconsistbis} exists on the time interval $[0,T_1/\eps]$;
\item{\bf ii.} There exists a unique solution $(\underline{n},\underline{\bf v},\underline{\phi})\in C(\lbrack
 0,\frac{{T_1}}{\eps}\rbrack; H^{s+1}_{\eps\alpha}(\R^d)\times
 H^{s+1}_\eps(\R^d)^d\times H^{s+1}(\R^d))$ provided by Theorem
 \ref{thmIVPunifgen} to the Euler-Poisson equations with isothermal pressure \eqref{scaEPgen} with
 initial condition $(\underline{n}^0,\underline{\bf
   v}^0,\underline{\phi}^0)=(n^\e,\bv^\e,\phi^\e)_{\vert_{t=0}}$.
\item Moreover, one has the error estimate
$$
\forall 0\leq t\leq T_1/\eps, \qquad \abs{\underline{n}(t)-n^\e(t)}_{H^{s+1}_{\eps\alpha}}^2+\abs{
  \underline{\bv}(t)-\bv^\e(t)}_{H^{s+1}_\eps}^2\leq \eps^{3/2} t C(\frac{1}{c_0},T_1,\abs{n^0}_{H^{s+5}}).
$$
\end{theorem}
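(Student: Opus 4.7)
The plan is to follow the proof of Theorem \ref{thmjustif} almost verbatim, substituting Theorem \ref{thmIVPunifgen} for Theorem \ref{thmIVPunif} and Proposition \ref{propconsistbis} for Proposition \ref{propconsist}. First I choose $0 < T_1 \le \min\{\underline{T}, T_0\}$, where $\underline{T}/\eps$ is the uniform existence time supplied by Theorem \ref{thmIVPunifgen} applied to the initial data $(n^\eps,\bv^\eps,\phi^\eps)_{\vert_{t=0}}$, and $T_0/\eps$ is the existence time of the approximate solution from Proposition \ref{propconsistbis}. For this to make sense I need the initial datum to live in $H^{s+1}_{\eps\alpha}\times H^{s+1}_\eps$; this is immediate because the constructed profiles $n^{(1)}, n^{(2)}$ are given by smooth expressions in $n^{(1)}$ and its derivatives, whose Sobolev norms are controlled by $\abs{n^0}_{H^{s+5}}$.

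Next I set $(n,\bv,\phi) = (\underline{n},\underline{\bv},\underline{\phi}) - (n^\eps,\bv^\eps,\phi^\eps)$ and subtract \eqref{scaEPquar} from \eqref{scaEPgen}. The resulting system is a linear system of exactly the type \eqref{EPlingen} with background $(\un,\ubv,\underline{\phi})=(\underline{n},\underline{\bv},\underline{\phi})$, augmented by the standard perturbative linear terms $\eps \nabla n^\eps \cdot \bv$, $\eps(\nabla\cdot\bv^\eps)\, n$, $\eps\, \bv\cdot\nabla\bv^\eps$, a new contribution of the form $-\alpha (\eps n /[(1+\eps n^\eps)(1+\eps \underline{n})])\nabla n^\eps$ coming from the isothermal pressure, and the analog term coming from the Taylor expansion of $e^{\eps \phi}$ in the Poisson equation. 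The source terms are $f=\eps N^\eps$, ${\bf g}=\eps^{1/2} R^\eps$, and ${\bf h} = \eps\nabla r^\eps + \bigl[(e^{\eps\underline{\phi}}-e^{\eps\phi^\eps})/\eps-(\underline{\phi}-\phi^\eps)e^{\eps\phi^\eps}\bigr]\nabla\phi^\eps$. All extra linear terms are of the same algebraic structure as those already handled in Step 3 of Theorem \ref{thmIVPunifgen}, and only modify $\widetilde{C}_s$ so that it also depends on $\abs{(n^\eps,\bv^\eps,\phi^\eps)}_{H^{s+1}}$; crucially, the pressure-type correction is controlled in the weighted norm $H^{s+1}_{\eps\alpha}$.

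I then invoke the energy estimate \eqref{estsgen} of Theorem \ref{thmIVPunifgen}. Since the difference $(n,\bv)$ has zero initial data, after absorbing the $\eps T\bigl(\abs{n}_{H^{s+1}_{\eps\alpha}}^2+\abs{\bv}_{H^{s+1}_\eps}^2\bigr)$ term into the left-hand side by choosing $T_1$ smaller if necessary (Gronwall), I obtain
\begin{equation*}
\abs{n(t)}_{H^{s+1}_{\eps\alpha}}^2 + \abs{\bv(t)}_{H^{s+1}_\eps}^2 \;\le\; \eps\, t \,\exp(\widetilde{C}_s T_1)\,\bigl(\abs{f}_{H^{s+1}_{\eps\alpha,t}}^2 + \abs{{\bf g}}_{H^{s+1}_{\eps,t}}^2 + \abs{{\bf h}}_{H^s_t}^2\bigr).
\end{equation*}
Theorem \ref{thmIVPunifgen} controls the $\widetilde{C}_s$ constant by $C(1/c_0,\abs{n^0}_{H^{s+5}})$, and Proposition \ref{propconsistbis} gives $\abs{f}^2+\abs{{\bf g}}^2+\abs{{\bf h}}^2 \le \eps^{1/2}\, C(T_1,\abs{n^0}_{H^{s+5}})$ (the $\eps^{1/2}$ factor comes from the weakest residual, which is $R^\eps_2, R^\eps_3 = O(\eps^{3/2})$ in the transverse velocity equations). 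Combining these produces the announced $\eps^{3/2} t$ error.

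The main obstacle, and the reason the cold-plasma proof of Theorem \ref{thmjustif} does not apply directly, is the loss of derivative coming from $\alpha \nabla n/(1+\eps n)$ in the velocity equation: naively this term cannot be absorbed uniformly in $\eps$ when $\alpha$ is not small. This is exactly what the symmetrizer $\Neab$ and the weighted space $H^{s+1}_{\eps\alpha}$ were designed for in Theorem \ref{thmIVPunifgen}, and once one accepts to measure the density error in $H^{s+1}_{\eps\alpha}$ rather than $H^{s+1}$ the whole argument goes through uniformly in $\alpha \in (0,\alpha_0)$, which is precisely the improvement over \cite{GuoPu}.
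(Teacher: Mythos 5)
Your proposal is correct and follows precisely the route the paper takes: its proof of this theorem consists of the single remark that one proceeds exactly as for Theorem \ref{thmjustif}, replacing Theorem \ref{thmIVPunif} by Theorem \ref{thmIVPunifgen} and Proposition \ref{propconsist} by Proposition \ref{propconsistbis}, which is what you carry out in detail. Your identification of the extra $O(\eps\alpha)$ zeroth-order cross term from the isothermal pressure and of the role of the weighted norm $H^{s+1}_{\eps\alpha}$ in making the estimate uniform in $\alpha$ matches the paper's intent.
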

\begin{remark}
The comments made in Remarks \ref{rmm1} and \ref{rmm2} on the precision of the
Zakharov-Kuznetsov approximation for cold plasmas can be transposed to
the more general case considered here.
\end{remark}
\begin{remark}
As already said, the ZK equation \eqref{newZK} coincides in
dimension $d=1$ with the  KdV equation derived in
\cite{GuoPu}. A consequence of the uniformity of the existence time
with respect to $\alpha$ in Theorem \ref{thmIVPunifgen} is that
Theorem \ref{thmjustifgen} provides a justification on a time scale of
order $O(1/\eps)$ which is uniform with respect to $\alpha$ whereas it
shrinks to zero when $\alpha\to 0$ in \cite{GuoPu}. 
\end{remark}
\begin{merci}
The three authors acknowledge the support of IMPA,  the Brasilian-French program in Mathematics and the MathAmSud Project "Propagation of Nonlinear Dispersive Equations". D. L. acknowledges support from the project  ANR-08-BLAN-0301-01 and J.-C. S. from the project ANR-07-BLAN-0250 of the Agence Nationale de la Recherche.
\end{merci}


\begin{thebibliography}{6.}
\bibitem{AL} \textsc{B. Alvarez-Samaniego and D. Lannes}, {\em Large time existence for 3D water waves and asymptotics}, Invent. Math. {\bf 171}, (2008), 485-541.
\bibitem{BIM} \textsc{E. Bustamente, P. Isaza and J. Mej\' ia}, {\em On the support of solutions to the Zakharov-Kuznetsov equation}, J. Differential Eq. {251} (2011), 2728-2736.
\bibitem{DC-T} \textsc{Dongho Chae and E. Tadmor}, {\em On the finite time blow-up of the Euler-Poisson equations in $\R^2$}, Comm. in Math. Sci., {\bf 6}, No.3, (2008), 785-789. 
\bibitem{BC-T} \textsc{Bin Cheng and E. Tadmor}, {\em A sharp local blow-up condition for Euler-Poisson equations with attractive forcing}, arXiv : 0902.1582v1.
\bibitem{C-BY} \textsc{ T. Colin and W. Ben Youssef}, {\em Rigorous derivation of Korteweg-de Vries type systems from a general class of hyperbolic systems}, Math. Mod. Num. Anal. (M2AN), {\bf 34}, No.4 (2000), 873-911.
\bibitem{D} \textsc{R. C. Davidson}, {\em Methods in nonlinear plasma physics}, Academic Press, New-York (1972).
\bibitem{Fa} \textsc{A. Faminskii}, {\em  The Cauchy problem for the Zakharov-Kuznetsov equation}, 
Differential Equations, {\bf 31}, No. 6 (1995), 1002-1012.
\bibitem{GVHKR} \textsc{D. G\'erard-Varet, D. Han-Kwan, F. Rousset},
  {\it Quasineutral limit of the Euler-Poisson system for ions in a
    domain with boundaries}, submitted.
\bibitem{Grenier} \textsc{E. Grenier}, {\it Pseudo-differential energy estimates of singular perturbations}, Comm. Pure Appl.
Math. {\bf 50} (1997), 0821-0865.
\bibitem{GuoPausader} \textsc{I. Guo, B. Pausader}, {\it Global Smooth
    Ion Dynamics in the Euler-Poisson System}, arXiv:1003.3653. 
\bibitem{GuoPu} \textsc{I. Guo and X. Pu}, {\it KdV limit of the
    Euler-Poisson system}, arXiv:1202.1830.
\bibitem{HNS} \textsc{M. Haragus, D. P. Nicholls and D. H. Sattinger}, {\em Solitary wave interactions of the Euler-Poisson equations}, J. Math. Fluid Mech., {\bf 5}, (2003), 92-118.
\bibitem{HS}\textsc{M.Haragus and A. Scheel}, {\em Linear stability and instability of ion-acoustic plasma solitary waves}, Physica D, {\bf 170} (2002), 13-30.
\bibitem{HMT}\textsc {A. A. Himonas, G. Misiolek and  F. Tiglay} {\em On unique continuation for the modified Euler-Poisson equations}, Disc. Cont. Dynamical Systems {\bf 19} No. 3, (2007), 515-529.
\bibitem{IR}  \textsc{E. Infeld and G. Rowlands}, {\em Nonlinear waves, solitons and chaos}, Cambridge University
Press, Cambridge (1990).
\bibitem{KP} \textsc{T. Kato and G. Ponce} {\em Commutator estimates and the Euler and Navier-Stokes equations}, Comm. Pure Appl. Math. {\bf 41}, (1988) 891-917.
\bibitem{LaS}  \textsc{E.W. Laedke and K.H. Spatschek}, {\em Growth rates of bending solitons}, J. Plasma Phys.
{\bf 28} No. 3 (1982), 469--484.
\bibitem{Lcomm} \textsc{D. Lannes}, {\it Sharp estimates for
    pseudo-differential operators with symbols of limited smoothness
    and commutators}, J. Funct. Anal. {\bf 232} (2006), 495-539.
\bibitem{LS} \textsc{Y. Li and D. H. Sattinger}, {\em Solitons collisions in the ion-acoustic plasma equations}, J. Math Fluid Mech. {\bf 1} (1999), 117-130.
\bibitem{LiPa} \textsc{F. Linares and A. Pastor}, {\em Well-posedness for the two-dimensional modified Zakharov-Kuznetsov equation}, SIAM J. Math. Anal. {\bf 4}, No. 4 (2009), 1329-1339.
\bibitem{LiS} \textsc{F. Linares and J.-C. Saut}, {\em  The Cauchy problem for the 3D Zakharov-Kuznetsov equation}, Disc. Cont. Dynamical Systems A, {\bf 24}, No. 2 (2009), 547-565.
\bibitem{LPS} \textsc{F. Linares, A. Pastor and  J.-C. Saut}, {\em Well-posedness for the Zakharov-Kuznetsov equation in a cylinder and on the background of a KdV soliton}, Comm. Partial Diff. Equations  {\bf 35} (9) (2010), 1674-1689.
\bibitem{Pa} \textsc{M. Panthee}, {\em A note on the unique continuation property for Zakharov-Kuznetsov equation}, Nonlinear Anal. {\bf 59} (2004) 425-438.
\bibitem{RV} \textsc{F. Ribaud and S. Vento} {\em Well-posedness results for the 3D
Zakharov-Kuznetsov equation}, arXiv :1111.2850v1, 11 Nov. 2011.
\bibitem{RV2} \textsc{F. Ribaud and S. Vento} {\em 
A note on the Cauchy problem for the 2D
generalized Zakharov-Kuznetsov equations}, arXiv :1111.4384v1, 18 Nov. 2011.
\bibitem{T} \textsc{F.Ti\v{g}lay}, {\em The Cauchy problem and integrability of a modified Euler-Poisson equation}, Trans. A.M.S. {\bf 360} No. 4 (2008), 1861-1877.
\bibitem{VMPH} \textsc{F. Verheest, R. L. Mace, S. R. Pillay and M. A. Hellberg}, {\em Unified derivation of
Korteweg-de Vries-Zakharov-Kuznetsov equations in multispecies plasmas}, J. Phys. A: Math. Gen.{\bf 35}
 (2002), 795--806.
\bibitem{ZK} \textsc{V.E. Zakharov and E.A. Kuznetsov}, {\em On three dimensional solitons}, Sov. Phys. JETP.,
{\bf 39} (1974), 285--286.
\end{thebibliography}
\end{document}